\newtheorem{theorem}{Theorem}[section]
\newtheorem{lemma}[theorem]{Lemma}
\newtheorem{corollary}[theorem]{Corollary}
\newtheorem{proposition}[theorem]{Proposition}
\newtheorem{claim}[theorem]{Claim}
\theoremstyle{definition}
\newtheorem{remark}[theorem]{Remark}
\newtheorem{definition}[theorem]{Definition}
\def\cD{\mathcal{D}}
\def \DD {\mathcal D}
\def \cK {\mathcal K}
\def \T {\Theta}
\def \t {\theta}
\def \d {\delta}
\def \L {\Lambda}
\def \dd {\partial}
\def \D {\Delta}
\def\dcl{\operatorname{dcl}}
\def\acl{\operatorname{acl}}
\def\alg{\operatorname{alg}}
\def\tp{\operatorname{tp}}
\def\id{\operatorname{id}}
\def\res{\operatorname{res}}
\def\pr{\operatorname{pr}}
\def\Frac{\operatorname{Frac}}
\def\ecdf{\mathcal D\operatorname{-CF}_0}
\def\ddcf{\mathcal D\operatorname{-DCF}_{0,m}}
\def\ddcfr{\mathcal D\operatorname{-DCF}_{0,r}}
\def\dcfa{\operatorname{DCF}_{0,m}\operatorname A}
\def\x{\overline x}
\def\a{\overline a}
\def\b{\overline b}
\def \mm {\mathfrak m}
\def\Ind#1#2{#1\setbox0=\hbox{$#1x$}\kern\wd0\hbox to 0pt{\hss$#1\mid$\hss}
\lower.9\ht0\hbox to 0pt{\hss$#1\smile$\hss}\kern\wd0}
\def\ind{\mathop{\mathpalette\Ind{}}}
\def\Notind#1#2{#1\setbox0=\hbox{$#1x$}\kern\wd0\hbox to 0pt{\mathchardef
\nn=12854\hss$#1\nn$\kern1.4\wd0\hss}\hbox to
0pt{\hss$#1\mid$\hss}\lower.9\ht0 \hbox to
0pt{\hss$#1\smile$\hss}\kern\wd0}
\begin{document}

\title[Differential fields with free operators]{The model companion of differential fields\\ with free operators}

\author{Omar Le\'on S\'anchez}
\address{Omar Le\'on S\'anchez\\
McMaster University\\
Department of Mathematics and Statistics\\
1280 Main Street West\\
Hamilton, Ontario \  L8S 4L8\\
Canada}
\email{oleonsan@math.mcmaster.ca}

\author{Rahim Moosa}
\address{Rahim Moosa\\
University of Waterloo\\
Department of Pure Mathematics\\
200 University Avenue West\\
Waterloo, Ontario \  N2L 3G1\\
Canada}
\email{rmoosa@uwaterloo.ca}

\date{\today}

\let\thefootnote\relax\footnote{2010 {\em Mathematics Subject Classification:} 03C60, 12H05, 12L12.}

\begin{abstract}
A model companion is shown to exist for the theory of partial differential fields of characteristic zero equipped with free operators that commute with the derivations.
The free operators here are those introduced in [R. Moosa and T. Scanlon, {\em Model theory of fields with free operators in characteristic zero}, Preprint 2013].
The proof relies on a new lifting lemma in differential algebra: a differential version of Hensel's Lemma for local finite algebras over differentially closed fields.
\end{abstract}

\maketitle

\tableofcontents

\section{Introduction}

\noindent
A new rather general formalism for the model theory of fields with operators was introduced and developed by the second author and Thomas Scanlon in~\cite{paperC}.
That theory treats differential fields and difference fields uniformly, and also allows for a wide variety of other additive operators, or systems of operators, whose multiplicative rules are induced  by a ring homomorphism from the field to a finite algebra over the field.
These were called {\em free} operators in~\cite{paperC}, and we delay recalling the precise definition until $\S$\ref{ddfields} below.
A model companion was shown to exist in characteristic zero, and both the basic model theory as well as versions of the Canonical Base Property and the Zilber Dichotomy for the model companion were established.
An important shortcoming of~\cite{paperC} is that the theories dealt with in that paper cannot impose any commutativity on the operators (this is partly why they are called ``free").
Indeed, at the level of generality of~\cite{paperC} it is known that commutativity cannot always be imposed: the theory of a pair of commuting automorphisms has no model companion.
Nevertheless, there are several natural examples of theories of commuting operators that have very well behaved and understood model companions.
In this paper we extend~\cite{paperC} to deal with some of these commuting situations; namely the context where the fields come already equipped with several commuting derivations and the new operators are assumed to commute with the derivations.
That is, we prove that {\em the theory of partial differential fields of characteristic zero equipped with free operators that commute with the derivations has a model companion}.
This is done in~$\S$\ref{ddclosedfields}.

A particular case is the theory of partial differential fields equipped with an automorphism.
The existence of the model companion for this theory was established by the first author in~\cite{leonsanchez13}, and served as a model for us.

There are two main obstacles in passing from fields to differential fields.
The first is that in order to to extend the geometric characterisation of the existentially closed models that appears in~\cite{paperC} one requires lifting nonsingular solutions of systems of differential polynomial equations from a differentially closed field to a local finite algebra over that field.
Such a differential analogue of Hensel's Lemma appears new and should be of independent interest.
It is established here as Theorem~\ref{diffhen}.

The second obstacle is that the geometric characterisation one obtains quantifies over irreducible differential varieties.
Because irreducibility of Kolchin closed sets is not known to be a definable property of the parameters, one has to modify the characterisation into a provably first-order axiomatisation of the existentially closed models.
Here we follow the method established by the first author in~\cite{leonsanchez12} and~\cite{leonsanchez13}, which uses characteristic sets of prime differential ideals.

A word about the proofs: we have tried to focus on those aspects of the arguments where the differential setting poses a significant challenge.
Whenever the results of~\cite{paperC} extend automatically, or by using well known standard methods, we either omit proofs or content ourselves with brief sketches.

 \bigskip
\section{Some Preliminaries on Differential Algebra}
\label{prelims}

\noindent
While we will be assuming familiarity with the model theory of differential fields and the differential algebra that it entails, we take this opportunity to fix some notation and also to recall some of the more technical aspects of differential algebra that may not be top of mind among model theorists.
For further details we suggest Chapters~I and IV of~\cite{kolchin73}.

Suppose $R$ is a unital and commutative ring (all of our rings will be so) equipped with $m$ commuting derivations $\Delta=\{\delta_1,\dots,\delta_m\}$.
For $x=(x_1,\dots,x_n)$ a tuple of indeterminates, we denote by $R\{x\}$ the $\Delta$-ring of $\Delta$-polynomials over $R$, and by $R\langle x\rangle$ the $\Delta$-field of $\Delta$-rational functions.
We denote by $\Theta$ the free commutative monoid  generated by $\Delta$, and consider the set of {\em algebraic indeterminates}
$$\T x=\{\t x_i:\, 1\leq i\leq n, \, \t\in \T\}$$
So the underlying $R$-algebra structure on $R\{x\}$ is that of the polynomial ring $R[\T x]$.
For $\Lambda\subseteq R\{x\}$ we will use $[\Lambda]$ to denote the $\Delta$-ideal and $(\Lambda)$ to denote the (usual algebraic) ideal, generated by $\Lambda$.

There is a canonical ranking on $\T x$ given by
\begin{displaymath}
\d_m^{e_m}\cdots \d_1^{e_1}x_i< \d_m^{r_m}\cdots \d_1^{r_1}x_j \iff \left(\sum e_k,i,e_m,\dots,e_1\right)<\left(\sum r_k,j,r_m,\dots,r_1\right)
\end{displaymath}
where the ordering on the right-hand-side is the lexicographic one.
This allows us to view $\T x$ as a sequence enumerated with respect to the above linear ordering.
Let $f\in R\{x\}\setminus R$. The \emph{leader} of $f$, $v_f$, is the highest ranking algebraic indeterminate that appears in $f$. The \emph{degree} of $f$, $d_f$, is the degree of $v_f$ in $f$. The \emph{rank} of $f$ is the pair $(v_f,d_f)$ under the lexicographic ordering.
By convention, an element of $R$ has lower rank than all the elements of $R\{x\}\setminus R$.
The \emph{separant} of $f$, $S_f$, is the formal partial derivative of $f$ with respect to $v_f$.
The \emph{initial} of $f$, $I_f$, is the leading coefficient of $f$ when viewed as a polynomial in $v_f$.
Note that both $S_f$ and $I_f$ have lower rank than $f$.
Given a finite subset $\Lambda\subset R\{x\}$, we set $H_\Lambda:=\prod_{f\in \Lambda}I_fS_f$.

\begin{definition}
\label{coherent}
A subset $\L\subset R\{x\}\setminus R$ is said to be {\em coherent} if the following properties are satisfied:
\begin{itemize}
\item[(1)] Autoreducedness.
For each $f\neq g$ in $\L$, no proper derivative of $v_f$ appears in $g$, and if $v_f$ appears at all in $g$ then it does so with strictly smaller degree.
\item[(2)] Coherency Condition.
Suppose that for some $f\neq g$ in $\L$, there are derivatives $\theta_f$ and $\theta_g$ such that $\theta_fv_f=\theta_gv_g=:v$.
Suppose moreover that $v$ is the least algebraic indetermiate for which this happens.
Then for some $N$, 
$$H_\L^N(S_g\theta_ff-S_f\theta_gg)\in(\L)_v$$
where $(\L)_v$ is the ideal generated by $\{\theta f: f\in\L, v_{\theta f}<v\}$.
\end{itemize}
\end{definition}

\begin{remark}
Autoreducedness implies that $\L$ is finite and that distinct elements of $\L$ have distinct leaders.
\end{remark}

\begin{lemma}\label{uselem}
 Let $\phi:R\to S$ be a homomorphism of differential rings.
 Suppose $\L$ is a coherent subset of $R\{x\}$, and let $\L^\phi:=\{f^\phi:f\in\L\}$, where $f^\phi\in S\{x\}$ is obtained by applying $\phi$ to the coefficients of $f$.
 If $H_\L^\phi\neq 0$ then $H_{\L^\phi}=H_\L^\phi$ and $\L^\phi$ is coherent.
\end{lemma}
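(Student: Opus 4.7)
The plan is to show first that the hypothesis $H_\L^\phi\neq 0$ forces the leader and degree of each $f\in\L$ to be preserved under $\phi$, from which preservation of separants and initials (and hence of $H_\L$) follows, and then to transport the two defining clauses of coherency one at a time.

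Concretely, I would first observe that $H_\L^\phi=\prod_{f\in\L}I_f^\phi S_f^\phi\neq 0$ implies $I_f^\phi\neq 0$ for every $f\in\L$. Writing $f=I_fv_f^{d_f}+\text{lower terms in }v_f$, the nonvanishing of $I_f^\phi$ means that $f^\phi$ still has $v_f$ as a variable, still with the same top degree $d_f$. Therefore $v_{f^\phi}=v_f$, $d_{f^\phi}=d_f$, $I_{f^\phi}=I_f^\phi$, and $S_{f^\phi}=S_f^\phi$ (since the separant is a formal partial derivative in $v_f$, which commutes with applying $\phi$ to the coefficients). Multiplying over $f\in\L$ gives $H_{\L^\phi}=H_\L^\phi$.

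Next, I would deduce autoreducedness of $\L^\phi$: for $f\neq g$ in $\L$, the leader $v_{f^\phi}=v_f$, and since no proper derivative of $v_f$ appears in $g$ (and $v_f$ appears in $g$ only in degree $<d_f$), the same holds after applying $\phi$ to the coefficients of $g$; note that the possible vanishing of some coefficients of $g$ under $\phi$ can only remove terms, never create new ones. Together with the equality of leaders just established, this gives clause~(1) of Definition~\ref{coherent} for $\L^\phi$.

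For clause~(2), because leaders are preserved, the set of least common derivatives $\theta_fv_f=\theta_gv_g=v$ occurring for pairs $f\neq g$ in $\L^\phi$ is identical to the one for $\L$, and the minimal such $v$ is the same. Fixing such a pair, we apply $\phi$ to the witnessing relation $H_\L^N(S_g\theta_ff-S_f\theta_gg)\in(\L)_v$. Since $\phi$ commutes with the derivations in $\Theta$, this becomes $H_{\L^\phi}^N(S_{g^\phi}\theta_ff^\phi-S_{f^\phi}\theta_gg^\phi)\in(\L^\phi)_v$, as required; here we also use that $\{\theta f:f\in\L,\ v_{\theta f}<v\}$ maps under $\phi$ into $\{\theta f^\phi:f^\phi\in\L^\phi,\ v_{\theta f^\phi}<v\}$, which is again immediate from the preservation of leaders. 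The only mild subtlety, and the one point I would double-check, is that no spurious cancellations under $\phi$ alter which indeterminate is the leader of a proper derivative $\theta f$; but the leader of $\theta f$ is $\theta v_f$, determined symbolically from $v_f$, so this is automatic once $v_{f^\phi}=v_f$.
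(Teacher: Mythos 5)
Your proof is correct and follows essentially the same route as the paper's: one checks that $H_\L^\phi\neq 0$ forces leaders, degrees, separants and initials to be preserved under $\phi$ (the paper gets the leader from $S_f^\phi\neq 0$ where you use $I_f^\phi\neq 0$, but both are available from the hypothesis), then transports autoreducedness and the coherency relation, the latter via the observation that membership in $(\L)_v$ is carried to membership in $(\L^\phi)_v$. Nothing further is needed.
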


\begin{proof}
In general, every algebraic indeterminate that appears in $f^\phi$ appears in $f$.
Moreover,
$$\frac{\partial f^\phi}{\partial v_f}=\left(\frac{\partial f}{\partial v_f}\right)^\phi=S_f^\phi$$
It follows that if $S_f^\phi\neq 0$, then $f$ and $f^\phi$ have the same leader and $S_{f^\phi}=S_f^\phi$.
In this case, if $I_f^\phi\neq 0$ we get also that $I_{f^\phi}=I_f^\phi$.

Since $H_\L^\phi\neq 0$, it follows from the above that $\L$ and $\L^\phi$ have the same leaders and that $H_{\L^\phi}=H_\L^\phi$.
Since the algebraic indeterminates of $f^\phi$ appear in $f$ with the same degree, we also get that $\L^\phi$ is autoreduced.
To see that the coherency condition is satisfied we need only observe that if $g\in (\L)_v$ then $g^\phi\in (\L^\phi)_v$.
\end{proof}

Suppose $K$ is a $\Delta$-field of characteristc zero.
While it is not the case that all prime $\Delta$-ideals of $K\{x\}$ are finitely generated as $\Delta$-ideals (though they are finitely generated as radical $\Delta$-ideals), something close is true; they are determined by certain canonical finite subsets. There is a natural ranking of the autoreduced subsets of $K\{x\}$ that is roughly lexicographical, see \cite[$\S$I.10]{kolchin73} for the definition. If $I\subset K\{x\}$ is a prime $\Delta$-ideal then a {\em characteristic set} $\L$ for $I$ is a minimal autoreduced subset of $I$ with respect to this ranking.
These always exist, are in addition coherent, and determine the ideal $I$ in the sense that
\begin{equation}\label{equ}
I=\{f\in K\{x\}:\text{ for some $\ell\geq 0$, } H_\Lambda^\ell f\in[\Lambda]\}
\end{equation}
Taking zero sets in some differentially closed extension of $K$  we have that
$$V(I)\setminus V(H_\Lambda)=V(\Lambda)\setminus V(H_\Lambda)$$
A consequence of the minimality of $\Lambda$ is that $H_\Lambda\notin I$, and hence the above equality says that $V(I)$ and $V(\L)$ agree off a proper Kolchin closed subset.

\bigskip
\section{A Differential Lifting Lemma}
\label{lift}

\noindent
The following theorem, which we will use later but which may also be of independent interest, is a differential analogue of the artinian case of Hensel's Lemma.

\begin{theorem}\label{diffhen}
Suppose $L$ is a differentially closed field and $R$ is a local finite $L$-algebra, equipped with a differential structure extending that of $L$, and such that the maximal ideal of $R$ is a $\Delta$-ideal.
Suppose $\L=\{f_1,\dots,f_s\}\subset R\{x\}$ is a coherent set of differential polynomials in one variable over $R$.
Suppose, finally, that $a\in L$ is such that $\res(f)(a)=0$ for all $f\in \L$ but $\res(H_\L)(a)\neq 0$. 
Then there exists $b\in R$ such that $\res(b)=a$ and $f(b)=0$ for all $f\in \L$.
\end{theorem}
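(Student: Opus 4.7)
The plan is to prove this by induction on the nilpotency index $N$ of the maximal ideal $\mm$, which is finite because $R$ is artinian. The base case $N=1$ gives $R=L$, and $b:=a$ works. For the inductive step, set $R':=R/\mm^{N-1}$; its maximal ideal has nilpotency index strictly less than $N$, and by Lemma~\ref{uselem} the reduction $\L'$ of $\L$ to $R'\{x\}$ is coherent (since $H_\L$ modulo $\mm$ does not vanish at $a$, hence is nonzero). The inductive hypothesis applied to $(R',\L')$ yields $b'\in R'$ with $\res(b')=a$ and $f'(b')=0$ for every $f'\in\L'$. Lift $b'$ to any $b_0\in R$; then $f(b_0)\in\mathfrak{a}:=\mm^{N-1}$ for every $f\in\L$, and the goal becomes to find $\epsilon\in\mathfrak{a}$ with $f(b_0+\epsilon)=0$ for all $f\in\L$, so that $b:=b_0+\epsilon$ is as required.

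Because $\mathfrak{a}$ is a $\Delta$-ideal (as $\mm$ is) with $\mathfrak{a}^2=\mathfrak{a}\cdot\mm=0$, it is a finite-dimensional $L$-vector space carrying a compatible $\Delta$-action. For $\epsilon\in\mathfrak{a}$ every $\theta\epsilon$ lies in $\mathfrak{a}$, and products of two such elements vanish, so the Taylor expansion of $f$ about $b_0$ collapses at first order to
\[
f(b_0+\epsilon) \;=\; f(b_0) \;+\; \sum_{\theta}\overline{\frac{\partial f}{\partial \theta x}}(a)\cdot\theta\epsilon,
\]
the sum being over $\theta\in\T$ with $\theta x$ appearing in $f$, and with only the residues modulo $\mm$ of the coefficients surviving (as they multiply elements of $\mathfrak{a}$, which is annihilated by $\mm$). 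Consequently the problem reduces to solving the inhomogeneous linear $\Delta$-system
\[
L_f(\epsilon) \;=\; -f(b_0),\qquad f\in\L,
\]
in the $L$-$\Delta$-module $\mathfrak{a}$, where each $L_f$ is a linear $\Delta$-operator over $L$ whose leading term is $\bar S_f(a)\cdot\theta_f\epsilon$ and $\bar S_f(a)\neq 0$.

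The main obstacle is this linear solvability step, which is where coherency combines with the differential closedness of $L$. The strategy is a triangular elimination guided by the ranking on $\T x$: since $\bar S_f(a)\neq 0$, each equation $L_f(\epsilon)=-f(b_0)$ expresses $\theta_f\epsilon$ in terms of strictly lower-ranking derivatives of $\epsilon$ together with the known right-hand side $-f(b_0)/\bar S_f(a)\in\mathfrak{a}$. Whenever two members $f\neq g$ of $\L$ yield, via further differentiation, expressions for the same derivative of $\epsilon$, condition (2) of Definition~\ref{coherent} guarantees the two expressions agree modulo elements of $(\L)_v$, i.e.\ modulo lower-rank data already accounted for, so no inconsistency arises. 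Fixing an $L$-basis of $\mathfrak{a}$ then converts this triangular linear $\Delta$-system into a finite system of linear $\Delta$-equations over $L$ in finitely many unknowns, solvable because $L$ is differentially closed. The nonvanishing of $H_{\bar\L}(a)$ ensures that the initials and separants appearing as denominators in the elimination remain invertible throughout, yielding the desired $\epsilon\in\mathfrak{a}$ and completing the induction.
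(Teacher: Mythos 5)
Your overall architecture --- successive approximation modulo powers of $\mathfrak{m}$, collapse of the Taylor expansion to first order because $\mathfrak{m}^{N-1}$ squares to zero and is annihilated by $\mathfrak{m}$, and reduction to an inhomogeneous linear $\Delta$-system whose leading coefficients $\overline{S_f}(a)$ are invertible --- is essentially the paper's proof. The gap is in your final step, which is where all the real work lies. A finite system of linear $\Delta$-equations over a differentially closed field is \emph{not} automatically solvable: one must prove the system is consistent (for instance $\delta_1 y=0$, $\delta_2 y=c$ with $\delta_1 c\neq 0$ has no solution in any $\Delta$-extension). Your ``triangular elimination \dots no inconsistency arises'' is precisely an assertion of Rosenfeld's lemma for the linearized system, and with $m>1$ commuting derivations this is a genuine theorem rather than an observation: Definition~\ref{coherent}(2) only controls the \emph{least} common derivative $v$ of each pair of leaders, and one must show that the compatibility conditions arising at all higher derivatives are consequences of those finitely many. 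This is exactly the difficulty the paper identifies as the main obstacle.

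Two concrete steps are also missing before Rosenfeld's result can even be invoked. First, to convert the module-valued problem in $\mathfrak{a}=\mathfrak{m}^{N-1}$ into scalar equations over $L$ you cannot take an arbitrary $L$-basis: the derivations act nontrivially on $\mathfrak{a}$, so coordinates with respect to a general basis yield a coupled system with extra connection terms, and the coordinate functionals fail to commute with $\Delta$ --- which you need in order to transport the coherency of $\Lambda$ to the linearized system. The paper uses differential closedness of $L$ (existence of fundamental systems of solutions of integrable linear systems) to choose a basis of $\mathfrak{m}^{N-1}$ consisting of $\Delta$-constants, which decouples the system and makes the coordinate maps $\Delta$-equivariant; this is an essential use of the hypothesis that you omit. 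Second, even after the coherency condition is transported, the linearized set need not be autoreduced (the leader of the linearization of $f$ may still occur in that of $g$), so a further reduction is required to produce a genuinely coherent set of linear differential polynomials, to which one then applies Rosenfeld's theorem that such a set generates a prime differential ideal, whence a solution exists in the differentially closed field $L$. As written, your proof asserts the conclusions of these steps without supplying them.
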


\begin{remark}
A proof of this theorem in the special case  when $R=L[\epsilon]/(\epsilon^2)$ and $\epsilon$ is a $\Delta$-constant can be extracted from Kolchin~\cite[Chapter~0, $\S$4]{kolchin85}, and served as a model for us.
In the ordinary case, when $\Delta=\{\delta\}$, other special cases are also known.
For example the case of $R=L[\epsilon]/(\epsilon^{n+1})$ with $\epsilon$ a $\delta$-constant follows from Scanlon's $D$-Hensel's Lemma~\cite[Proposition~6.1]{scanlon2000}.
The main difficulty that our proof below must contend with is the presence of several commuting derivations.
\end{remark}

 \begin{proof}[Proof of~Theorem~\ref{diffhen}]
 Let $\mm$ be the maximal ideal of $R$.
 Note that as $L$ is algebraically closed, the residue field of $R$ is $L$.
 
 We will recursively define a sequence $b_1,b_2,\dots$ in $R$ such that
 \begin{itemize}
 \item
$\res(b_1)=a$
\item
$b_{i+1}\equiv b_i\mod\mm^i$, and 
\item
$f(b_i)\equiv 0\mod\mm^i$ for each $f\in\L$
\end{itemize}
As $\mm$ is nilpotent, this will produce, in finitely many steps, the desired root of $\L$ lifting $a$.
For the base case, we may take any $b_1\in R$ with $\res(b_1)=a$.
Suppose $i\geq 1$ and we have found $b_i$ with the desired properties.

As $\mm$ is a $\Delta$-ideal, and $R$ is a finite dimensional vector space over $L$, we have that $\mm^i/\mm^{i+1}$ is a finite dimensional {\em $\Delta$-module over $L$}.
That is, each $\delta_j$ induces an additive map on $\mm^i/\mm^{i+1}$, which we also denote by $\delta_j$, and which satisfies
$\delta_j(rv)=\delta_j(r)v+r\delta_j(v)$ for all $r\in L$ and $v\in\mm^i/\mm^{i+1}$.
The existence of fundamental systems of solutions to integrable linear differential equations in differentially closed fields \cite[Appendix D]{vdPSinger} implies that $\mm^i/\mm^{i+1}$ has an $L$-basis made up of $\Delta$-constants.
 We can therefore find $\epsilon_1,\dots,\epsilon_n\in \mm^i$ whose image is an $L$-basis for $\mm^i/\mm^{i+1}$, and such that  $\delta(\epsilon_k)\equiv 0\mod\mm^{i+1}$ for all $\delta\in\D$ and all $k=1,\dots,n$.
 
 Let $\lambda_1,\dots,\lambda_k:\mm^i\to L$ be the $L$-linear maps that satisfy
 $$v\equiv\sum_{k=1}^n\lambda_k(v)\epsilon_k\mod\mm^{i+1}$$
 for all $v\in\mm^i$.
 The fact that we have chosen the basis to be made up of $\Delta$-constants modulo $\mm^{i+1}$, implies that the $\lambda_k$'s will commute with $\Delta$.
 
 We are looking for $y_1,\dots,y_n\in L$ such that
 \begin{equation}
 \label{need}
 f(b_i+\sum_{k=1}^ny_k\epsilon_k)  \equiv  0\mod \mm^{i+1} \ \ \ \ \ \ \ \ \ \ \text{for all $f\in \L$}
\end{equation} 
Indeed, we could then set $b_{i+1}:=b_i+\sum y_k\epsilon_k$, and the construction would be complete.

To compute the left-hand-side of~(\ref{need}), we write $f(x)=F(\Theta x)$, where $F$ is an algebraic polynomial in the variables $\Theta x=(\theta x:\theta \in \Theta)$, and then apply a Taylor expansion to $F\big(\Theta b_i+\Theta (\sum_{k=1}^ny_k\epsilon_k)\big)$.
Note that as the $\epsilon_k$'s are $\Delta$-constants modulo $\mm^{i+1}$, $\Theta(\sum_{k=1}^ny_k\epsilon_k)\equiv\sum_{k=1}^n\Theta(y_k)\epsilon_k\mod\mm^{i+1}$.
Hence, the higher order terms in the Taylor expansion will vanish.
The upshot is that
\begin{eqnarray*}
f(b_i+\sum_{k=1}^ny_k\epsilon_k)
&\equiv &
f(b_i)+\sum_{\theta\in\Theta}\frac{\partial F}{\partial(\theta x)}(\Theta b_i)\cdot \big(\sum_{k=1}^n\theta(y_k)\epsilon_k\big)\mod\mm^{i+1}\\
&\equiv&
\sum_{k=1}^n\lambda_k(f(b_i))\epsilon_k+\sum_{k=1}^n\left(\sum_{\theta\in\Theta}\frac{\partial F}{\partial(\theta x)}(\Theta b_i)\cdot\theta(y_k)\right)\epsilon_k\mod\mm^{i+1}\\
&\equiv&
\sum_{k=1}^n\left[\lambda_k(f(b_i))+\sum_{\theta\in\Theta}\res\left(\frac{\partial F}{\partial(\theta x)}(\Theta b_i)\right)\theta(y_k)\right]\epsilon_k\mod\mm^{i+1}
\end{eqnarray*}
where in the last step we use the fact that for all $r\in R$, $r\epsilon_k\equiv \res(r)\epsilon_k\mod\mm^{i+1}$.
Hence, solving the system of congruences~(\ref{need}) is thereby reduced to solving, for each fixed $k=1,\dots,n$, the following inhomogeneous system of linear differential equations in $y_k$ over $L$:
\begin{equation}
\label{needeqn}
\lambda_k(f(b_i))+\sum_{\theta\in\Theta}\res\left(\frac{\partial F}{\partial(\theta x)}(\Theta b_i)\right)\theta(y_k) =0\ \ \ \ \ \ \text{for all $f\in \L$}
\end{equation}
Note that $i,b_i$, and $k$ are all fixed.

Let us denote the linear differential polynomial over $L$ that is the left-hand-side of~(\ref{needeqn}) by $\ell_f$.
So
$$\ell_f(y_k):=\lambda_k(f(b_i))+\sum_{\theta\in\Theta}\res\left(\frac{\partial F}{\partial(\theta x)}(\Theta b_i)\right)\theta(y_k)$$
To complete the proof of the theorem, therefore, we need to show that the linear differential system $\{\ell_f(y_k)=0:f\in \Lambda\}$ has a solution in $L$.
This would follow if we knew that $\ell\Lambda:=\{\ell_f:f\in\L\}$ was a coherent set in $L\{y_k\}$.
Indeed, any coherent set of linear differential polynomials generates a prime differential ideal (see for example statement~(2) on page~399 of~\cite{rosenfeld}), and hence the differentially closed $L$ would contain a root of $\ell\L$.
Actually the situation is a little more complicated as $\ell\L$ need not be coherent.
But we will find a coherent set of linear differential polynomials that generate the same differential ideal as $\ell\L$, and that will of course suffice for the above argument to go through.

Note that $\ell_f$ is defined for any $f\in R\{x\}$ with the property that $f(b_i)\in\mm^i$.
Consider the $\Delta$-ideal $V:=\{f\in R\{x\}:f(b_i)\in\mm^i\}$ of $R\{x\}$, and the map $\ell:V\to L\{y_k\}$ given by $f\mapsto\ell_f$.
By the induction hypothesis, $\Lambda\subset V$.

\begin{claim}
\label{ell}
The map $\ell:V\to L\{y_k\}$ satisfies the following properties:
\begin{itemize}
\item[(a)]
it is a homomorphism of $\Delta$-modules over $L$,
\item[(b)]
if $f\in V$ then, for any $g\in R\{x\}$, $\ell_{gf}=\res(g)(a)\ell_f$,
\item[(c)]
for $f\in V$, $f$ and $\ell_f$ have the same leaders (though in $x$ and $y_k$, respectively), and moreover, the separant of $\ell_f$ is $\res(S_f)(a)$.
\end{itemize}
\end{claim}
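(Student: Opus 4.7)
The plan is to verify (a), (b), (c) by direct calculation from the defining formula of $\ell_f$, using three facts: the maps $\lambda_k$ are $L$-linear and commute with $\Delta$; $\res:R\to L$ is a $\Delta$-ring homomorphism; and $\res(b_i)=a$, so $\res(\theta b_i)=\theta(a)$ for every $\theta\in\Theta$. First I would check that $V$ is a $\Delta$-ideal of $R\{x\}$, so that $\ell$ is defined on $\delta f$ whenever $f\in V$: since $\mm$ is a $\Delta$-ideal, $\mm^i$ is too (by Leibniz), and the identity $(\delta f)(b_i)=\delta(f(b_i))$, which follows from writing $f=F(\Theta x)$, then yields $\delta V\subseteq V$.

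For (a), additivity and $L$-linearity of $\ell$ are immediate from the corresponding properties of $\lambda_k$ and of partial differentiation. The substantive point is $\ell_{\delta f}=\delta(\ell_f)$ for $\delta\in\Delta$. The constant terms match because
$$\lambda_k\big((\delta f)(b_i)\big)=\lambda_k\big(\delta(f(b_i))\big)=\delta\big(\lambda_k(f(b_i))\big).$$
For the coefficient of $\theta_0(y_k)$, write $\delta f=G(\Theta x)$ with $G=\delta F+\sum_{\theta}\frac{\partial F}{\partial(\theta x)}\cdot\delta\theta x$, apply $\frac{\partial}{\partial(\theta_0 x)}$, evaluate at $\Theta b_i$, and take $\res$. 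The outcome splits as $\delta\!\left(\res\big(\frac{\partial F}{\partial(\theta_0 x)}(\Theta b_i)\big)\right)$ plus, when $\delta\mid\theta_0$, an extra term $\res\big(\frac{\partial F}{\partial((\theta_0/\delta)x)}(\Theta b_i)\big)$. The first piece is exactly the coefficient of $\theta_0(y_k)$ produced in $\delta(\ell_f)$ by differentiating the scalar in front of $\theta_0(y_k)$ in $\ell_f$; the second is exactly the contribution produced when $\delta$ is applied to $(\theta_0/\delta)(y_k)$ in $\ell_f$.

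For (b), expand $\ell_{gf}$ using the product rule $\frac{\partial(GF)}{\partial(\theta x)}=\frac{\partial G}{\partial(\theta x)}F+G\,\frac{\partial F}{\partial(\theta x)}$, evaluate at $\Theta b_i$, and take $\res$. The first summand's residue vanishes because $\res(F(\Theta b_i))=\res(f(b_i))=0$, since $f(b_i)\in\mm^i\subseteq\mm$; the second contributes $\res(g)(a)\,\res\!\big(\frac{\partial F}{\partial(\theta x)}(\Theta b_i)\big)$. For the constant term, write $g(b_i)=\res(g)(a)+u$ with $u\in\mm$; then $(gf)(b_i)=g(b_i)f(b_i)\equiv\res(g)(a)\,f(b_i)\pmod{\mm^{i+1}}$, so $\lambda_k((gf)(b_i))=\res(g)(a)\,\lambda_k(f(b_i))$.

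For (c), if $\theta x>v_f$ then $\theta x$ does not appear in $F$, so $\frac{\partial F}{\partial(\theta x)}=0$ and the coefficient of $\theta(y_k)$ in $\ell_f$ vanishes; at $\theta x=v_f$, $\frac{\partial F}{\partial v_f}=S_f$, so that coefficient is $\res(S_f(b_i))=\res(S_f)(a)$, giving both the leader $\theta(y_k)$ and its separant at once. No step is hard; the only place requiring careful bookkeeping is (a), where the chain-rule $\delta\theta x$ terms from differentiating $F$ must be matched precisely against the derivative-shift $\delta(\theta(y_k))=(\delta\theta)(y_k)$ inside $\ell_f$.
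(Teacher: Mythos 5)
Your proposal is correct and follows essentially the same route as the paper: direct verification from the defining formula of $\ell_f$, using that $\lambda_k$ and $\res$ commute with $\Delta$; you in fact write out the chain-rule bookkeeping for (a) that the paper declares ``tedious but routine'' and leaves to the reader, and your term-by-term Leibniz computation for (b) is equivalent to the paper's observation that $f\bigl(b_i+\sum_j y_j\epsilon_j\bigr)$ lands in $\mm^i$ so that $g$ contributes only its residue modulo $\mm^{i+1}$. The only point worth making explicit in (c) is that the coefficient $\res(S_f)(a)$ of the candidate leader is nonzero (for $f\in\L$ this follows from $\res(H_\L)(a)\neq 0$), which is what guarantees that $v_f$'s counterpart really is the leader of $\ell_f$.
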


\begin{proof}[Proof of Claim~\ref{ell}]
That $\ell$ is $L$-linear is clear, so for part~(a) we need to show that for all $\delta\in\Delta$, $\delta(\ell_f)=\ell_{\delta f}$.
This can be directly verified from the definition of $\ell$, using the fact that $\lambda_k$ and $\res$ commute with $\delta$.
The computation is tedious but routine, and we leave it to the reader.

For part~(b), suppose $f\in V$.
By construction $\ell_{gf}(y_k)$ is the coefficient of $\epsilon_k$ in the expansion of
$$gf\left(b_i+\sum_{j=1}^ny_j\epsilon_j\right)=g\left(b_i+\sum_{j=1}^ny_j\epsilon_j\right)f\left(b_i+\sum_{j=1}^ny_j\epsilon_j\right)$$
modulo $\mm^{i+1}$.
We have already seen that for any $y_1,\dots,y_n\in L$, $f\left(b_i+\sum_{j=1}^ny_j\epsilon_j\right)$ is an $L$-linear combination of $\{\epsilon_1,\dots,\epsilon_n\}$ and hence lands in $\mm^i$.
It follows that, working modulo $\mm^{i+1}$, the only contribution of $g\left(b_i+\sum_{j=1}^ny_j\epsilon_j\right)$ to the above product will be $\res\left(g\left(b_i+\sum_{j=1}^ny_j\epsilon_j\right)\right)=\res(g)(a)$.
This proves part~(b).

Finally, for part~(c), note that if an algebraic indeterminate $\theta(y_k)$ appears in $\ell_f\in V$, then $\res\left(\frac{\partial F}{\partial(\theta x)}(\Theta b_i)\right)\neq 0$, and so $\theta x$ must appear in $f$.
On the other hand, if $v_f=\theta x$ is the leader of $f$, then the coefficient of $\theta(y_k)$ in $\ell_f$ is  $\res\big(\frac{\partial F}{\partial v_f}(\Theta b_i)\big)=\res\big(S_f(b_i) \big)=\res(S_f)(a)$, which is nonzero because by assumption $\res(H_\L)(a)\neq 0$.
It follows that $f$ and $\ell_f$ have the same leaders (though in $x$ and $y_k$, respectively).
\end{proof}

We use the claim to prove that $\ell\L:=\{\ell_f:f\in\Lambda\}\subset L\{y_k\}$ satisfies the coherency condition, that is, part~(2) of Definition~\ref{coherent}.
Suppose $f\neq g$ in $\L$ and there are derivatives $\theta_f$ and $\theta_g$ such that $\theta_fv_{\ell_f}=\theta_gv_{\ell_g}=:v$, and $v$ is the least such occurrence.
We show that $S_{\ell_g}\theta_f\ell_f-S_{\ell_f}\theta_g\ell_g\in(\ell\L)_v$.
Indeed, by~\ref{ell}(c), we have that $v=\theta_fv_f=\theta_gv_g$, and this is the least such occurrence for $\L$.
Since $\L$ is coherent, there is $N$ such that $h:=H_\L^N(S_g\theta_ff-S_f\theta_gg)\in(\L)_v$.
It follows by Claim~\ref{ell} parts~(a) and~(b) that $\ell_h\in(\ell\L)_v$.
But, also by~\ref{ell},
\begin{eqnarray*}
\ell_h
&=&
\res(H_\L)(a)\big(\res(S_g)(a)\theta_f\ell_f-\res(S_f)(a)\theta_g\ell_g\big)\\
&=&
\res(H_\L)(a)\big(S_{\ell_g}\theta_f\ell_f-S_{\ell_f}\theta_g\ell_g\big)
\end{eqnarray*}
Since $\res(H_\L)(a)\neq 0$, we get that $S_{\ell_g}\theta_f\ell_f-S_{\ell_f}\theta_g\ell_g\in(\ell\L)_v$, as desired.

It may not be the case, however, that $\ell\L$ is autoreduced.
The leaders of distinct members are distinct, and it is {\em partially} autoreduced in the sense that no proper derivative of the leader of $\ell_f$ appears in $\ell_g$ for $f\neq g$ in~$\L$.
Indeed, this follows from the fact that $\L$ is autoreduced, that $\ell_f$ and $f$ have the same leader, and that every algebraic indeterminate appearing in $\ell_g$ also appears in $g$.
The obstacle to autoreducedness is that the leader of $\ell_f$ may still appear in $\ell_g$, just not as its leader.
But now a standard reduction process (see the second paragraph of the proof of Proposition~5 of~\cite[Chapter~0, $\S$4]{kolchin85}, for example) can be used to transform $\ell\L$ into an autoreduced set.
The kinds of transformations we have in mind are as follows: if $v_{\ell_f}$ appears with coefficient $c$ in $\ell_f$ and coefficient $d$ in $\ell_g$, then we can replace $\ell_g$ by $\ell_g-\frac{d}{c}\ell_f$.
This process will produce a coherent set of linear differential polynomials $\Gamma\subset L\{y_k\}$ such that $[\Gamma]=[\ell\L]$.
It follows, as explained earlier, that $[\ell\L]$ is a prime differential ideal and so the system~(\ref{needeqn}) has a solution in $L$.
This completes the proof of Theorem~\ref{diffhen}.
\end{proof}

\begin{remark}
The above theorem remains true even if we remove the assumption that $R$ is local. In this case one replaces the maximal ideal by the Jacobson radical $\mathfrak J$ of $R$,  the residue map by the quotient map $\pr:R\to R/{\mathfrak J}$, and the condition $\res(H_\L)(a)\neq 0$ by $\pr(H_\L)(a)$ being a unit.
\end{remark}

\begin{proof}
The general case reduces to the local case of Theorem~\ref{diffhen} by the following claim:
{\em Suppose $(K,\Delta)\subseteq (R,\Delta)$ is an extension of differential rings where $K$ is a field of characteristic zero and $R$ is a finite $K$-algebra.
If $R=\prod_{i=0}^tR_i$ is the decomposition of $R$ into local $K$-algebras, then $\Delta$ induces a differential structure on each $R_i$.
Moreover, if the Jacobson radical of $R$ is a $\Delta$-ideal, then so is the maximal ideal of each $R_i$.}
To prove the above claim, for each $i=0,\dots, t$, and each $r\in R_i$, let $\widehat r$ denote the element of $R$ whose $R_i$-th co-ordinate is $r$ and whose $R_j$-th co-ordinate is $0_{R_j}$ for all $j\neq i$.
Let $\widehat R_i=\{\widehat r:r\in R_i\}$.
Then for all $\delta\in\Delta$ and $r\in R_i$ we have
$$\delta(\widehat r)=\delta(\widehat r\widehat 1_{R_i})=\widehat r\delta(\widehat 1_{R_i})+\delta(\widehat r)\widehat 1_{R_i}\in \widehat R_i$$
which shows that $\delta(\widehat R_i)\subseteq \widehat R_i$.
So we get an induced $\Delta$-structure on each $R_i$.
For the ``moreover" clause, note that if $\mathfrak m_i$ is the maximal ideal of $R_i$, then $\prod_{i=0}^t\mathfrak m_i$ is the Jacobson radical of $R$.

Note that the reduction to the local case also uses the fact that the co-ordinate projection $R\to R_i$ of a coherent set is again coherent.
This follows from Lemma~\ref{uselem}; one uses the assumption that $\pr(H_\L)(a)$ is a unit to see that the co-ordinate projection of $H_\Lambda$ is nonzero and so Lemma~\ref{uselem} applies.
\end{proof}

\bigskip
\section{$\cD$-Differential fields}
\label{ddfields}

\noindent
We wish to study differential fields $(K,\Delta)$ equipped with additional linear operators, or stacks of operators, that commute with the derivations in $\Delta$.
The behaviour of these operators with respect to multiplication in the field should be governed by a fixed finite $K$-algebra, in a way that we now make precise.

Fix throughout the rest of the paper the following data:
\begin{itemize}
\item
a base field $k$ of characteristic zero,
\item
a finite $k$-algebra $B$ with the property that for each of the (finitely many) maximal ideals $\mm$ of $B$, $B/\mm=k$,
\item
a $k$-algebra homomorphism $\pi:B\to k$, and
\item
a $k$-basis $(\epsilon_0,\dots,\epsilon_\ell)$ of $B$ such that $\pi(\epsilon_0)=1$ and $\pi(\epsilon_i)=0$ if $i\neq 0$.
\end{itemize}
Note that the condition on the residue fields above is automatically satisfied if $k$ is algebraically closed.

Given any $k$-algebra $R$ we let
$$\cD(R):=B\otimes_kR$$
denote the base extension, which will be a finite and free $R$-algebra whose corresponding basis we will also denote by $(\epsilon_0,\dots,\epsilon_\ell)$.
We can also lift $\pi:B\to k$ to a surjective $R$-algebra homomorphism $\pi^R:\cD(R)\to R$, by base extension.

Now, if $R$ is equipped with $k$-linear derivations $\Delta=\{\delta_1,\dots,\delta_m\}$, then we can canonically make $\cD(R)$ a $\Delta$-ring by defining $\delta_j$ on $\cD(R)$ to be the unique $B$-linear derivation that extends $\delta_j$ on $R$; that is, by defining $\delta_j(b\otimes r):=b\otimes\delta_j(r)$.

\begin{definition}
A {\em $\cD$-differential ring} is a differential $k$-algebra $(R,\Delta)$ equipped with a sequence of operators $\partial=(\partial_1,\dots,\partial_{\ell})$ such that $e:R\to \cD(R)$ given by
$$r\mapsto r\epsilon_0+\partial_1(r)\epsilon_1+\cdots+\partial_\ell(r)\epsilon_\ell$$
is a homomorphism of differential $k$-algebras.

An equivalent formulation of $(R,\Delta,\partial)$ being a $\cD$-differential ring is that:
\begin{itemize}
\item[(a)]
$R$ is a $k$-algebra
\item[(b)]
$\Delta=(\delta_1,\dots,\delta_m)$ are commuting $k$-linear derivations on $R$
\item[(c)]
$(R,\partial)$ is a $\cD$-ring in the sense of~\cite{paperC}; that is, the map $e:R\to\cD(R)$ defined above is a homomorphism of $k$-algebras, and
\item[(d)]
the $\partial$ commute with the $\Delta$.
\end{itemize}
Indeed, it is an easy computation to see that $e$ will preserve the differential structure if and only if each operator in $\partial$ commutes with each operator in $\Delta$.
\end{definition}

Thus, all the examples of free operators described in~$\S3$ of~\cite{paperC} have $\cD$-differential analogues by imposing the further condition that they commute with $\D$.
In particular, difference-differential rings are $\cD$-differential where $\cD$ is given by setting $B=k\times k$ and  taking the standard basis.

\begin{remark}
As in~\cite{paperC}, we often identify a $\cD$-differential structure on $(R,\Delta)$ with a differential $k$-algebra homomorphism $e:R\to\cD(R)$ having the property that $\pi^R\circ e=\id$.
Indeed, given such an $e$, if we let $\partial_i$ be the operator that takes $r$ to the $\epsilon_i$-coefficient of $e(r)$ in $\cD(R)$, then $(R,\Delta,\partial)$ is a $\cD$-differential ring.
\end{remark}

Associated to a $\cD$-differential ring $(R,\Delta,\partial)$ we have canonical endomorphisms of $(R,\Delta)$.
Indeed, these are precisely the endomorphisms of $R$ associated to $(R,\partial)$ in~\cite[$\S4$]{paperC}.
To describe them, let $\displaystyle B=\prod_{i=0}^tB_i$ be the decomposition of $B$ into local $k$-algebras.
By assumption the residue field of each $B_i$ is $k$, and so we get surjective $k$-algebra homomorphisms $\pi_i:B\to k$ obtained by precomposing the residue map $B_i\to k$ with the co-ordinate projection $B\to B_i$.
Now, given a $k$-algebra $R$, we have the corresponding decomposition $\displaystyle \cD(R)=\prod_{i=0}^tB_i\otimes_kR$, and surjective $R$-algebra homomorphisms $\pi_i^R:\cD(R)\to R$.
Note that because the maximal ideal of $B_i$ is nilpotent, if $L$ is any  field extension of $k$ then $B_i\otimes_kL$ is again a local $L$-algebra with residue field $L$.
Hence in this case $\pi_i^L$ is the composition of the co-ordinate projection $\DD(L)\to B_i\otimes_k L$ with the residue map $B_i\otimes_kL\to L$.

If $(R,\partial)$ is a $\cD$-ring then the {\em associated endomorphisms} of $R$ introduced in~\cite{paperC} are the maps $\sigma_i:=\pi_i^R\circ e$.
If, moreover, $(R,\Delta,\partial)$ is a $\cD$-differential ring, then $\pi_i^R$ and $e:R\to \cD(R)$ both preserve the differential structure, and so the $\sigma_i$ are endomorphisms of $(R,\Delta)$.
We may, and do, assume that $\pi_0=\pi$, so that $\sigma_0=\id$.

Extending $\cD$-differential structure will entail extending the associated endomorphisms.
In order to pass from an integral domain to the fraction field it is therefore necessary that the associated endomorphisms be injective. In fact this is sufficient:

\begin{lemma}
\label{fraction}
Suppose $R$ is a $\Delta$-subring of a $\Delta$-field $(L,\Delta)$, extending $k$.
Suppose $e:R\to\cD(L)$ is a $k$-linear homomorphism of $\Delta$-rings, such that for all $i=0,1,\dots,t$, the maps $\pi_i^L\circ e:R\to L$ are injective.
Then there is a unique extension of $e$ to $\big(\Frac(R),\Delta\big)$.
\end{lemma}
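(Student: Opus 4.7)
The plan is to define the extension $\widetilde e:\Frac(R)\to\cD(L)$ by the obvious formula $\widetilde e(a/b):=e(a)e(b)^{-1}$, and then to verify that it is a well-defined $k$-algebra homomorphism commuting with $\Delta$. Uniqueness will be automatic, since any ring homomorphism extending $e$ to $\Frac(R)$ must be given by this formula.

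The crucial step — and really the only content — is to show that for every nonzero $b\in R$, the image $e(b)\in\cD(L)$ is a unit. For this, note that the maximal ideals of $\cD(L)=B\otimes_k L=\prod_{i=0}^t(B_i\otimes_k L)$ are precisely the preimages of $(0)$ under the maps $\pi_i^L:\cD(L)\to L$, since each $B_i\otimes_k L$ is a local $L$-algebra with residue field $L$ and residue map the restriction of $\pi_i^L$. Hence $e(b)$ is a unit in $\cD(L)$ if and only if $\pi_i^L(e(b))\neq 0$ for every $i=0,\dots,t$. But by hypothesis, each $\pi_i^L\circ e:R\to L$ is injective, so this is automatic for $b\neq 0$.

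Having this, I would define $\widetilde e(a/b):=e(a)e(b)^{-1}$ and check well-definedness (if $a/b=a'/b'$ then $ab'=a'b$, apply $e$, then multiply by $e(b)^{-1}e(b')^{-1}$) and the ring homomorphism axioms; these are formal. To verify that $\widetilde e$ is a homomorphism of $\Delta$-rings, fix $\delta\in\Delta$ and compute, using that $e$ commutes with $\delta$ and that $e(b)$ is a unit:
\begin{eqnarray*}
\delta\bigl(\widetilde e(a/b)\bigr)
&=&
\delta\bigl(e(a)e(b)^{-1}\bigr)
\ = \
\delta(e(a))\,e(b)^{-1}-e(a)\,e(b)^{-2}\,\delta(e(b))\\
&=&
e(\delta a)\,e(b)^{-1}-e(a)\,e(b)^{-2}\,e(\delta b)
\ = \
\widetilde e\!\left(\frac{(\delta a)\,b-a\,(\delta b)}{b^{2}}\right)
\ = \
\widetilde e\bigl(\delta(a/b)\bigr).
\end{eqnarray*}
This finishes the argument; there is no genuine obstacle beyond the unit-in-$\cD(L)$ observation, which is exactly where the injectivity assumption on each associated endomorphism $\sigma_i=\pi_i^L\circ e$ is used.
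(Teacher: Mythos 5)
Your proof is correct and follows essentially the same route as the paper: the paper simply delegates the algebraic part (that $e(b)$ is a unit in $\cD(L)$ whenever $b\neq 0$, via the local decomposition of $\cD(L)$ and the injectivity of the $\pi_i^L\circ e$) to Lemma~4.9 of~\cite{paperC}, and then observes, as you do, that the extension to $\Frac(R)$ is unique and automatically a $\Delta$-homomorphism. Your write-up just makes the cited unit argument and the quotient-rule verification explicit.
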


\begin{proof}
Forgetting about $\Delta$ for the moment, by Lemma~4.9 of~\cite{paperC}, we can extend $e$ to a $k$-algebra homomorphism on $\Frac(R)$.
But such an extension is unique and automatically preserves the differential structure.
\end{proof}

The following proposition reduces the problem of extending $\cD$-differential structure to the much easier problem of extending difference--differential structure.
It is in here that our differential lifting lemma (Theorem~\ref{diffhen}) is used.

\begin{proposition}
\label{extension}
Suppose $(R,\Delta)$ is a $\Delta$-subring of a differentially closed field $(L,\Delta)$ extending $k$, and $e:R\to\cD(L)$ is a homomorphism of differential $k$-algebras such that $\pi^L\circ e:R\to L$ is the inclusion map.
Suppose moreover that we have extensions of $\pi_1^L\circ e,\dots,\pi_t^L\circ e$ to endomorphisms of $(L,\Delta)$, say $\sigma_1,\dots,\sigma_t$.
Then there exists an extension of $e$ to a $\cD$-differential field structure on $L$ whose associated endomorphisms are $(\sigma_1,\dots,\sigma_t)$.
\end{proposition}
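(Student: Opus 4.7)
The plan is to extend $e$ step by step using Zorn's lemma. Let $\mathcal E$ be the collection of pairs $(R',e')$ where $R'$ is a $\Delta$-subring of $L$ containing $R$ and $e':R'\to \cD(L)$ is a differential $k$-algebra homomorphism extending $e$ with $\pi_i^L\circ e'=\sigma_i|_{R'}$ for all $i$. This is nonempty and closed under unions of chains, so it has a maximal element $\tilde e:\tilde R\to\cD(L)$; since the $\sigma_i$ are injective, Lemma~\ref{fraction} lets us assume $\tilde R$ is a $\Delta$-subfield of $L$. If $\tilde R=L$ we are done, so assume not and pick $a\in L\setminus\tilde R$. I will extend $\tilde e$ to a $\Delta$-homomorphism on $\tilde R\langle a\rangle$, contradicting maximality.

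The main case is that $a$ is differentially algebraic over $\tilde R$. Let $I:=\{f\in\tilde R\{x\}:f(a)=0\}$, a prime $\Delta$-ideal, and let $\Lambda$ be a characteristic set for $I$. The strategy is to build $e(a)\in\cD(L)=\prod_{i=0}^t B_i\otimes_k L$ component-by-component, using Theorem~\ref{diffhen} on each local factor. Fix $i$; then $B_i\otimes_k L$ is a local finite $L$-algebra with residue field $L$ and maximal ideal $\mathfrak m_i\otimes_k L$, which is a $\Delta$-ideal under the $B_i$-linear extension of $\Delta$. Let $\tilde e_i:\tilde R\to B_i\otimes_k L$ denote the $i$-th coordinate of $\tilde e$, and let $\Lambda^{\tilde e_i}\subset (B_i\otimes_k L)\{x\}$ be obtained by applying $\tilde e_i$ to coefficients. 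Since $H_\Lambda\notin I$ (minimality of $\Lambda$) and $\sigma_i$ is injective, $\res\bigl(H_\Lambda^{\tilde e_i}(\sigma_i(a))\bigr)=\sigma_i(H_\Lambda(a))\neq 0$; in particular $H_\Lambda^{\tilde e_i}\neq 0$, so by Lemma~\ref{uselem} the set $\Lambda^{\tilde e_i}$ is coherent with $H_{\Lambda^{\tilde e_i}}=H_\Lambda^{\tilde e_i}$. For each $f\in\Lambda$ we also have $\res\bigl(f^{\tilde e_i}(\sigma_i(a))\bigr)=\sigma_i(f)(\sigma_i(a))=\sigma_i(f(a))=0$. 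All hypotheses of Theorem~\ref{diffhen} are in place, yielding $b_i\in B_i\otimes_k L$ with $\res(b_i)=\sigma_i(a)$ and $f^{\tilde e_i}(b_i)=0$ for every $f\in\Lambda$.

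Set $e(a):=(b_0,\dots,b_t)\in\cD(L)$ and define $\hat e:\tilde R\{x\}\to\cD(L)$ by $\hat e|_{\tilde R}=\tilde e$ and $\hat e(\theta x)=\theta\,e(a)$ for $\theta\in\Theta$; this is a $\Delta$-ring homomorphism that kills $\Lambda$ by construction. Each coordinate of $\hat e(H_\Lambda)$ has nonzero residue in $L$, so $\hat e(H_\Lambda)$ is a unit in $\cD(L)$; combined with the identification $I=\{f:H_\Lambda^n f\in[\Lambda]\text{ for some }n\}$ of equation~(\ref{equ}), this gives $\hat e(I)=0$. Hence $\hat e$ descends to a $\Delta$-homomorphism $\tilde R\{a\}\to\cD(L)$ extending $\tilde e$, whose composition with $\pi_i^L$ still agrees with $\sigma_i$; Lemma~\ref{fraction} then extends it further to $\tilde R\langle a\rangle$. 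The remaining case, that $a$ is differentially transcendental over $\tilde R$, is easier: the $\theta a$'s are algebraically independent over $\tilde R$, so one may simply set $b_i:=\sigma_i(a)$ and define $\hat e(\theta x):=\theta(b_0,\dots,b_t)$ with no relations to check. The whole burden of the proof therefore rests on the differentially algebraic case, and the key point — the one that specifically requires our Hensel-type Theorem~\ref{diffhen} — is the existence of a simultaneous lift of $\sigma_i(a)$ to a zero of $\Lambda^{\tilde e_i}$ in each local factor $B_i\otimes_k L$.
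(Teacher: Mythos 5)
Your proposal is correct and follows essentially the same route as the paper's proof: a Zorn's lemma reduction to extending over a single element, the split into the $\Delta$-transcendental and $\Delta$-algebraic cases, and, in the main case, the application of Theorem~\ref{diffhen} to $\Lambda^{\tilde e_i}$ in each local factor $B_i\otimes_kL$ (with the hypotheses verified via Lemma~\ref{uselem} and the fact that $\sigma_i$ is an injective $\Delta$-endomorphism), followed by the upgrade from $\Lambda$ to all of $I$ using that $\hat e(H_\Lambda)$ is a unit. The only cosmetic difference is that the paper first passes to $\Frac(R)$ and phrases the saturation argument slightly differently, and it checks $H^{e_i}_{\Lambda}(b_i)\neq 0$ factor by factor rather than as a unit of $\cD(L)$; these are equivalent.
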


\begin{proof}
The assumptions imply that the $\pi_i^L\circ e$ are all injective on $R$.
We can therefore extend $e$ canonically to $K:=\Frac(R)$ by Lemma~\ref{fraction}.

Suppose that for arbitrary $a\in L$ we can extend $e$ to a homomorphism on the $\Delta$-ring generated by $a$ over $K$, denoted by $K\{a\}$, in such a way that $\pi_i^L\circ e(a)=\sigma_i(a)$ for all $i=0,\dots,t$.
It would then follow that each $\pi_i^L\circ e$ agrees with $\sigma_i$ on all of $K\{a\}$, and hence is in particular injective.
By Lemma~\ref{fraction} again, $e$ would extend to the $\Delta$-field generated by $a$ over $K$, denoted $K\langle a\rangle$, and $\pi_i^L\circ e$ would agree with $\sigma_i$ on all of $K\langle a\rangle$.
By a Zorn's Lemma argument, using the arbitrariness of $a$, this would complete the proof of the Proposition.

It suffices, therefore, to prove that for arbitrary $a\in L$, $e$ extends to $K\{a\}$ in such a way that $\pi_i^L\circ e(a)=\sigma_i(a)$ for all $i=0,\dots,t$.
Suppose first that $a$ is $\Delta$-transcendental over $K$.
Let $b_i\in B_i\otimes _k L$ be any element with residue $\sigma_i(a)$.
Now, by $\Delta$-transcendentality, there is a unique extension of $e$ to a homomorphism on $(K\{a\},\Delta)$ which takes $a$ to $(b_0,b_1,\dots,b_t)$.
By construction, $\pi_i^L\circ e(a)=\sigma_i(a)$.

It remains to consider the case when $a$ is $\Delta$-algebraic over $K$.
In order to extend $e$ to a homomorphism on $(K\{a\},\Delta)$ we need to find $b\in\cD(L)$ such that $g^e(b)=0$ for all $g\in I_\D(a/K):=\{f\in K\{x\}:f(a)=0\}$.
(Recall that $g^e$ denotes the differential polynomial over $\cD(L)$ obtained from $g$ by applying $e$ to the coefficients.)
Moreover, we require that $\pi_i^L(b)=\sigma_i(a)$ for all $i=0,\dots,t$.
The desired extension would then send $a$ to $b$.
Using the decomposition $\displaystyle \cD(L)=\prod_{i=0}^t\big(B_i\otimes_kL)$, it suffices to show:
\begin{itemize}
\item[($*$)]
Fix an arbitrary $i\in\{0,\dots,t\}$, let $R_i:=B_i\otimes_kL$ equipped with the unique $B_i$-linear differential structure extending $\Delta$ on $L$, and let $e_i:K\to R_i$ be the composition of $e$ with the projection onto the $i$th factor.
Then there exists $b_i\in R_i$ such that $g^{e_i}(b_i)=0$ for all $g\in I_\D(a/K)$, and such that $\res_i^L(b_i)=\sigma_i(a)$.
\end{itemize}
Indeed, $b=(b_0,\dots,b_t)$ would then work.

We show~($*$) by first lifting $\sigma_i(a)$ to a root of $\L^{e_i}$ in $R_i$, where $\L$ is a characteristic set for $I_\D(a/K)$.
To do this we need to verify the hypotheses of Theorem~\ref{diffhen}.
Note that $L$ is differentially closed by assumption (and this is the only place we use that assumption), and $R_i$ is a local finite differential $L$-algebra whose maximal ideal is a $\Delta$-ideal as it is generated by the maximal ideal of $B_i$ which are all $\Delta$-constants. 
If $f\in\L$ then $f(a)=0$, so that applying $\sigma_i$ shows that $\res_i^L(f^{e_i})$ vanishes on $\sigma_i(a)$.
On the other hand, since $H_\L(a)\neq 0$, applying $\sigma_i$ shows that $\res_i^L(H^{e_i}_\L)$ does not vanish on $\sigma_i(a)$.
In particular, $H^{e_i}_\L$ is not zero in $R_i\{x\}$ and so, by Lemma~\ref{uselem}, $H_{\L^{e_i}}=H_\L^{e_i}$ and $\L^{e_i}$ is coherent.
Theorem~\ref{diffhen} therefore applies, and we have $b_i\in R_i$ with residue $\sigma_i(a)$ and such that $g^{e_i}(b_i)=0$ for all $g\in\L$.
It follows that $g^{e_i}(b_i)=0$ for all $g\in [\L]$.
If $g\in I_\D(a/K)$ then, by (\ref{equ}) of \S\ref{prelims}, we have that $H_\L^Ng\in[\L]$ for some $N$, and so $\big(H_{\L^{e_i}}(b_i)\big)^Ng^{e_i}(b_i)=0$.
But we know that $H_{\L^{e_i}}(b_i)\neq 0$, since its residue is $\sigma_i\big(H_\L(a)\big)\neq 0$.
It follows that $g^{e_i}(b_i)=0$ for all $g\in I_\D(a/K)$.
This completes the proof of~($*$), and hence of Proposition~\ref{extension}.
\end {proof}

\bigskip
\section{$\cD$-prolongations of $\Delta$-varieties}
\label{prolong}

\noindent
Given a $\cD$-differential field $(K,\Delta,\partial)$ and an affine $\Delta$-variety $V\subseteq\mathbb A^n$ over $K$ we wish to define the $\cD$-prolongation of $V$.
Taking our cue from the general theory of abstract prolongations of algebraic varieties developed in~\cite{paperA}, the construction should look something like this: base change $V$ to $\cD(K)$ via the homomorphism $e:K\to\cD(K)$ and then take the Weil restriction back down to $K$ via the standard $K$-algebra structure on $\cD(K)=K\otimes_kB$.
Since the theory of Weil restrictions is not to our knowledge developed in the differential context, we will instead give this construction explicitly in co-ordinates.

Let $x:=(x_1,\dots,x_n)$ be co-ordinates for $\mathbb A^n$, and
$\x:=\big(x=x^{(0)},x^{(1)},\dots,x^{(\ell)}\big)$ co-ordinates for $\mathbb A^{(\ell+1) n}$, where recall $\ell$ is such that that $\dim_kB=\ell+1$.
Given $f\in K\{x\}$, let $f^{(0)},\dots,f^{(\ell)}\in K\{\x\}$ be such that
\begin{equation}
\label{fi}
f^e\big(\sum_{j=0}^\ell x^{(j)}\epsilon_j\big)=\sum_{j=0}^{\ell} f^{(j)}(\x)\epsilon_j
\end{equation}
in $\DD(K)\{\x\}$.
Note that $\DD(K)\{\x\}=\cD(K\{\x\})$ is a free $K\{\x\}$-module with basis $\{\epsilon_0,\dots,\epsilon_\ell\}$, so that these $f^{(j)}$ are uniquely determined.

Fix a differentially closed field $(L,\Delta)$ extending $(K,\Delta)$.

\begin{definition}
Suppose $V\subseteq L^n$ is a $\Delta$-closed set defined over $K$.
The {\em $\cD$-prolongation} of~$V$, denoted by $\tau(V,\cD,e)$, or just $\tau V$ for short, is the $\Delta$-closed subset of $L^{(\ell+1)n}$ defined by the equations $f^{(j)}(\x)=0$ for all $f\in I_\Delta(V/K)$ and all $j=0,\dots,\ell$.
\end{definition}

\begin{remark}
\label{prorem} \
\begin{itemize}
\item[(a)]
A straightforward computation shows that if $I_\Delta(V/K)=[\Lambda]$ then in defining $\tau V$ it suffices to consider $f\in\Lambda$.
It follows that $\tau$ commutes with base change, and so preserves $\DD$-differential fields of definition.
\item[(b)]
The association $\a\mapsto \sum_{j=0}^\ell a^{(j)}\epsilon_j$ gives us a natural identification of $\tau V$ with the solutions in $\cD(L)^n$ of the set of equations $\{f^e(x)=0:f\in I_{\Delta}(V/K)\}$.
\item[(c)]
In the pure $\cD$-fields case when $\Delta=\emptyset$, the prolongations introduced here coincide with the $\cD$-prolongations introduced in~\cite{paperA} and used in~\cite{paperC}.
\item[(d)]
When $\cD(R)=R\times R$, the prolongation introduced here is the difference-differential prolongation $V\times V^{\sigma}$ that was used in \cite{leonsanchez13}.
\end{itemize}
\end{remark}

Recall that our basis elements were chosen so that the projection onto the $\epsilon_0$-co-ordinate is the ring homomorphism $\pi:B\to k$.
It follows that the coefficient of $\epsilon_0$ in $\epsilon_j\epsilon_k$ is $0$ unless $j=k=0$ in which case it is $1$.
This, together with the fact that the $\epsilon_j$ are  $\Delta$-constants, implies that under the identification of $x^{(0)}=x$, $f^{(0)}(\x)=f(x)$.
Hence, the $x^{(0)}$-coordinate projection maps $\tau V$ to $V$.
We denote this morphism of $\Delta$-closed sets by $\widehat\pi:\tau V\to V$.

On $K$-points, $\widehat\pi$ has a section denoted by $\nabla:V(K)\to \tau V(K)$, and given by $a\mapsto(a,\partial_1a,\dots,\partial_\ell a)$.
Indeed, if $a\in V(K)$ and $f\in I_\Delta(V/K)$, then computing in $\cD(K)$
$$0=e(f(a))=f^e(a+\partial_1a+\cdots+\partial_\ell a)=\sum_{j=0}^\ell f^{(j)}(a,\partial_1a,\dots,\partial_\ell a)\epsilon_j$$
so that each $f^{(j)}(\nabla a)=0$, as desired.

Finally, for each $i=0,\dots,t$, we also have natural morphisms $\widehat\pi_i:\tau V\to V^{\sigma_i}$, where $V^{\sigma_i}$ refers to the transform of $V$ obtained by applying the associated endomorphism $\sigma_i$ to the coefficients of the defining equations of $V$.
This map is the one given on $L$-points by $\displaystyle \overline a\mapsto \sum_{j=0}^\ell a^{(j)}\pi_i(\epsilon_j)$, where recall that $\pi_i$ is the composition of the $k$-algebra projection $B\to B_i$ in the decomposition $B=\prod_{i=0}^tB_i$, with the residue map $B_i\to k$.
That this works is a consequence of the following easy computation:

\begin{lemma}
\label{fsigma}
Suppose $f\in K\{x\}$ and $\a\in L^{(\ell+1)n}$ is such that $f^{(j)}(\a)=0$ for all $j=0,\dots,\ell$.
Then for each $i=0,\dots,t$, $\sum_{j=0}^\ell a^{(j)}\pi_i(\epsilon_j)$ is a root of $f^{\sigma_i}$.
\end{lemma}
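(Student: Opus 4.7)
The plan is to apply the projection $\pi_i^L\colon\cD(L)\to L$ to the defining identity of the $f^{(j)}$'s, specialised at $\x=\a$.

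First, I would start with identity~(\ref{fi}), evaluated at $\x=\a$. The hypothesis $f^{(j)}(\a)=0$ for every $j$ collapses the right-hand side to zero, yielding
$$f^e\Big(\sum_{j=0}^\ell a^{(j)}\epsilon_j\Big)=0$$
in $\cD(L)$.

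Next, I would apply $\pi_i^L\colon\cD(L)\to L$ to both sides. Recall that $\pi_i^L$ is a homomorphism of differential $L$-algebras (obtained from $\pi_i\colon B\to k$ by base extension), and in particular commutes with every $\theta\in\Theta$; this is immediate from the fact that the $\Delta$-structure on $\cD(L)=B\otimes_k L$ is $B$-linear. Consequently, for any $y\in\cD(L)^n$ and any differential polynomial $g\in\cD(L)\{x\}$, one has $\pi_i^L(g(y))=g^{\pi_i^L}(\pi_i^L(y))$. Applied to $g=f^e$, whose coefficients are $e$-images of elements of $K$, and using $\pi_i^L\circ e=\sigma_i$ on $K$, this gives
$$\pi_i^L\big(f^e(y)\big)=f^{\sigma_i}\big(\pi_i^L(y)\big).$$
Combining this with the straightforward computation $\pi_i^L\big(\sum_{j=0}^\ell a^{(j)}\epsilon_j\big)=\sum_{j=0}^\ell a^{(j)}\pi_i(\epsilon_j)$ produces $f^{\sigma_i}\big(\sum_{j=0}^\ell a^{(j)}\pi_i(\epsilon_j)\big)=0$, as required.

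I do not anticipate any real obstacle here: the lemma is essentially the functoriality of (differential) polynomial evaluation under the ring homomorphism $\pi_i^L$, together with the relation $\pi_i^L\circ e=\sigma_i$ on $K$. The only care needed is to note that $\pi_i^L$ commutes with the derivations so that it interacts properly with differential polynomial evaluation, and to correctly track how $\pi_i^L$ acts on the basis elements $\epsilon_j$, both of which are built into the definition of $\pi_i^L$ as a base extension.
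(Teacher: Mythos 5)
Your proof is correct and follows essentially the same route as the paper: evaluate identity~(\ref{fi}) at $\x=\a$, use the hypothesis to get $f^e\big(\sum_j a^{(j)}\epsilon_j\big)=0$, and apply the differential homomorphism $\pi_i^L$ together with $\pi_i^L\circ e=\sigma_i$. The paper presents the same chain of equalities in a single line; your added remark that $\pi_i^L$ commutes with the derivations (because the $\Delta$-structure on $\cD(L)$ is $B$-linear) is a correct and worthwhile justification of the step the paper leaves implicit.
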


\begin{proof}
We compute in $L$,
$$0=\pi_i^L\big(\sum_{j=0}^\ell f^{(j)}(\a)\epsilon_j\big)=\pi_i^L\left(f^e\big(\sum_{j=0}^\ell a^{(j)}\epsilon_j\big)\right)=f^{\sigma_i}\big(\sum_{j=0}^\ell a^{(j)}\pi_i(\epsilon_j)\big)$$
where the second equality is by~(\ref{fi}) and the third is because $\pi_i^K\circ e=\sigma_i$.
\end{proof}

One checks easily that $\widehat\pi_0=\widehat\pi$.
Note also that $\widehat\pi_i\circ \nabla(a)=\sigma_i(a)$, for all $i$.

\bigskip
\section{$\cD$-differentially closed fields}
\label{ddclosedfields}

\noindent
The language in which we study $\cD$-differential rings is that of $k$-algebras equipped with function symbols for the derivations in $\Delta$ and the operators in $\partial$.
Note that the associated endomorphisms are $0$-definable as they are in fact $k$-linear combinations of the operators in $\partial$, as was pointed out in~\cite[$\S$4.1]{paperC}.

Let $\cK$ denote the class of $\cD$-differential rings that are integral domains and for which the associated endomorphisms are injective.
By Lemma~\ref{fraction}, $\cK$ is precisely the set of models of the universal part of the theory of $\cD$-differential fields.
Here is a geometric characterisation of existentially closed $\cD$-differential fields that precisely extends Theorem~4.6 of~\cite{paperC} to the differential setting.

\begin{theorem}
\label{geoax}
A $\cD$-differential ring $(K,\Delta,\partial)$ is existentially closed in the class $\cK$ if and only if
\begin{itemize}
\item[I]
$(K,\Delta)$ is a differentially closed field,
\item[II]
the associated endomorphisms are automorphisms, and
\item[III]
if $V\subseteq K^n$ is an irreducible $\Delta$-closed set and $W\subseteq \tau V$ is an irreducible $\Delta$-closed subset of the $\cD$-prolongation such that $\widehat\pi_i(W)$ is $\Delta$-dense in $V^{\sigma_i}$ for all $i=0,\dots,t$, then there exists $a\in V$ with $\nabla(a)\in W$.
\end{itemize}
\end{theorem}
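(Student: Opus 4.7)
The plan is to prove the two implications separately, following the template of Theorem~4.6 of~\cite{paperC} but with Proposition~\ref{extension} as the key differential-algebraic ingredient for extending $\cD$-structures to differentially closed ambient fields.

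For the forward direction, assume $(K,\Delta,\partial)$ is existentially closed in $\cK$. I would first embed $(K,\Delta)$ into a sufficiently large differentially closed field $(L,\Delta)$. To establish~(I) and~(II), I would extend each $\sigma_i$ to a $\Delta$-endomorphism (respectively, $\Delta$-automorphism) of $L$ by standard extension arguments for differential-field embeddings, and then invoke Proposition~\ref{extension} to lift the $\cD$-differential structure to all of $L$. Since any $\Delta$-polynomial system over $K$ with a solution in some $\Delta$-extension then has a solution in $L$, existential closedness forces $K$ to be differentially closed, yielding~(I); similarly, each equation $\sigma_i(x)=c$ with $c\in K$ has a solution in $L$ once the chosen extension is surjective, giving~(II). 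For~(III), given $V$ and $W$ as in the hypothesis, I would take a $\Delta$-generic point $\overline w=(w^{(0)},\dots,w^{(\ell)})$ of $W$ over $K$ in $L$ and set $a:=w^{(0)}$, which by the $\Delta$-density of $\widehat\pi(W)$ in $V$ is itself a $\Delta$-generic point of $V$ over $K$. Define a $\Delta$-homomorphism $e:K\{a\}\to\cD(L)$ by $a\mapsto \sum_{j=0}^\ell w^{(j)}\epsilon_j$; this is well-defined because for any $f\in I_\Delta(V/K)=I_\Delta(a/K)$ the components $f^{(0)},\dots,f^{(\ell)}$ all vanish at $\overline w\in W\subseteq\tau V$. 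Density of $\widehat\pi_i(W)$ in $V^{\sigma_i}$ ensures each $\pi_i^L\circ e$ is injective, so Lemma~\ref{fraction} extends $e$ to $K\langle a\rangle$. Choosing $\Delta$-endomorphisms of $L$ that extend each $\sigma_i$ and send $a$ to $\widehat\pi_i(\overline w)$, Proposition~\ref{extension} then lifts the $\cD$-structure to all of $L$. The existential statement asserting the existence of $x\in V$ with $\nabla(x)\in W$ therefore holds in $L$ with witness $a$, and by existential closedness it holds in $K$.

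For the reverse direction, assume~(I),~(II), and~(III) hold, and let $(L,\Delta,\partial')\in\cK$ extend $(K,\Delta,\partial)$ with some $b\in L^n$ satisfying a quantifier-free formula $\phi(\overline x)$ over $K$. After absorbing iterated $\partial$-operators into a higher-dimensional tuple of variables, $\phi$ translates into a conjunction of $\Delta$-polynomial equations and inequations on $\nabla(b)$. Setting $V$ to be the $\Delta$-locus of $b$ over $K$ and $W$ the $\Delta$-locus of $\nabla(b)$ over $K$, we have $W\subseteq\tau V$ irreducible, and the density of $\widehat\pi_i(W)$ in $V^{\sigma_i}$ is automatic from Lemma~\ref{fsigma} since $\widehat\pi_i(\nabla(b))=\sigma_i(b)$ is $\Delta$-generic in $V^{\sigma_i}$. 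Axiom~III then yields $a\in V(K)$ with $\nabla(a)\in W$, and the finitely many inequations in $\phi$, which pull back to proper $\Delta$-closed subsets of $W$ missed by the generic point $\nabla(b)$, are handled by the standard density argument of~\cite{leonsanchez13}: one either shrinks along irreducible components or shows the set of admissible $K$-points is $\Delta$-Zariski dense in $V$.

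I expect the main obstacle to be the forward direction's verification of axiom~III: the careful choice of $\overline w$ and the verification that the induced map $e:K\{a\}\to\cD(L)$ is both a $\Delta$-homomorphism and compatible with chosen extensions of the $\sigma_i$ rely on the full machinery of $\cD$-prolongations from Section~\ref{prolong} together with Proposition~\ref{extension}, which itself rests on Theorem~\ref{diffhen}. By comparison, the handling of iterated $\partial$-operators and inequations in the reverse direction is relatively routine given the templates of~\cite{paperC} and~\cite{leonsanchez13}.
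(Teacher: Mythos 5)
Your proposal follows essentially the same route as the paper: both directions hinge on extending the associated endomorphisms to automorphisms of a larger differentially closed field and then invoking Proposition~\ref{extension} (hence Theorem~\ref{diffhen}) to lift the $\cD$-structure, with axiom~III witnessed by mapping a Kolchin generic point $a^{(0)}$ of $V$ to $\sum_j a^{(j)}\epsilon_j$ for $\overline a$ generic in $W$. The only cosmetic difference is that the paper explicitly passes to a further differentially closed extension $(L',\Delta)\supseteq(L,\Delta)$ when promoting the injective endomorphisms of $K\langle a^{(0)}\rangle$ to automorphisms, a step you elide but which is standard.
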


\begin{proof}
The only aspect of the proof of~\cite[Theorem~4.6]{paperC} that does not readily extend to the differential context is the reliance on the extension theorems which in turn rely on a Hensel's Lemma for local finite algebras over algebraically closed fields.
So replacing those arguments in~\cite{paperC} with our extension theorem (Proposition~\ref{extension} above), which followed from our differential Hensel's Lemma (Theorem~\ref{diffhen} above), we get a proof Theorem~\ref{geoax} quite easily.
Let us give a few details.

Suppose $(K,\Delta,\partial)$ is existentially closed in $\cK$.
Now, as the associated endomorphisms are isomorphisms between differential integral domains, and since $\operatorname{DCF}_{0,m}$ is the model completion of the theory of differential integral domains, the $\sigma_1,\dots,\sigma_t$ can be extended to automorphisms of some differentially closed field $(L,\Delta)$, which we also denote by $\sigma_1,\dots,\sigma_t$.
By Proposition~\ref{extension}, we can then extend the $\cD$-structure on $K$ to $L$ so that $(L,\Delta,\partial)$ is a $\cD$-differential field whose associated endomorphisms are the automorphisms $\sigma_1,\dots,\sigma_t$.
As $(K,\Delta,\partial)$ is existentially closed, it follows that $(K,\Delta)$ is already a differentially closed field and the associated endomorphisms are already surjective, thus establishing~I and~II.

For condition~III, let $(L,\Delta)$ be a differentially closed field extending $(K,\Delta)$ such that $W(L)$ has a point $\a$ that is Kolchin generic over $K$.
So $\a\in\tau V(L)$ and $a^{(0)}\in V(L)$.
Letting $a'=\sum_{j=0}^\ell a^{(j)}\epsilon_j$, we see that $a'$ is a root in $\cD(L)^n$ of $f^e(x)$ for all $f\in I_{\Delta}(V/K)$; see Remark~\ref{prorem}(b).
By the $\Delta$-density of $\widehat\pi(W)$ in $V$, we have that $I_\Delta(a^{(0)}/K)=I_\Delta(V/K)$, and so we can extend $e:K\to \cD(L)$ to $K\{a^{(0)}\}$ by sending $a^{(0)}\mapsto a'$.
Just as in~\cite[Theorem~4.6]{paperC}, the $\Delta$-density of $\widehat\pi_i(W)$ in $V^{\sigma_i}$, for $i=1,\dots,t$, will imply that the endomorphisms associated to $e:K\{a^{(0)}\}\to\cD(L)$ are injective.
Now, as in the previous paragraph, extending these endomorphisms to automorphisms of some $(L',\Delta)\supseteq (L,\Delta)$ and then using Proposition~\ref{extension}, we obtain a $\cD$-differential structure on $L'$ extending $e$ on $K\{a^{(0)}\}$.
So $(L',\Delta,\partial)\supseteq (K,\Delta,\partial)$ and $\nabla(a^{(0)})=\a$.
As $(K,\Delta,\partial)$ is existentially closed, it follows that there must exist $a\in V(K)$ with $\nabla(a)\in W(K)$, as desired.

As the readers can verify for themselves, the converse direction of Theorem~\ref{geoax} is proved exactly as the corresponding direction of~\cite[Theorem~4.6]{paperC}, with algebraically closed fields replaced by differentially closed fields and the Zariski topology replaced by the Kolchin topology.
\end{proof}

\begin{remark}
\label{better3}
Inspecting the proof of property~III for $(K,\Delta,\partial)$ existentially closed reveals a stronger conclusion: given any nonempty $\Delta$-open set $O\subseteq W$ the $a\in V$ could be found so that $\nabla(a)\in O$.
The point is that $\a\in W(L)$, being generic over~$K$, must land in $O(L)$.
\end{remark}

As in the case of existentially closed difference-differential fields considered in~\cite{leonsanchez13}, we do not know if the above characterisation amounts to a first-order axiomatisation because in differentially closed fields the property of being Kolchin irreducible is not known to be a definable property of the parameters of a $\Delta$-closed set.
Our strategy to avoid this difficulty is precisely the same as that of the first author in~\cite{leonsanchez13}.

Recall that for $\Lambda$ a finite subset of a $\Delta$-polynomial ring over a $\Delta$-field, $H_\Lambda$ is the product of the initials and separants of the members of $\Lambda$.
As was explained in~$\S$\ref{prelims}, when $\Lambda$ is a characteristic set for a prime $\Delta$-ideal $I$, the zero sets of $\Lambda$ and $I$ agree off the proper $\Delta$-closed hypersurface defined by $H_\Lambda$.
It will be useful to introduce notation for this $\Delta$-open set, we denote it by $V^*(\Lambda):=V(\Lambda)\setminus V(H_\Lambda)$.

\begin{proposition}
\label{3prime}
Suppose $(K,\Delta,\partial)$ is a $\cD$-differential field where $(K,\Delta)$ is differentially closed.
Then condition~III of Theorem~\ref{geoax} is equivalent to:
\begin{itemize}
\item[III$^\prime$]
Let $x=(x_1,\dots,x_n)$ be co-ordinates for $K^n$ and $\x=\big(x^{(0)},x^{(1)},\dots,x^{(\ell)}\big)$ co-ordinates for $\tau(K^n)=K^{(\ell+1)n}$.
Suppose $\Lambda\subset K\{x\}$ and $\Gamma\subset K\{\x\}$ are characteristic sets for prime ideals such that
\begin{itemize}
\item[(i)]
in $\tau(K^n)$, $V^*(\Gamma)\subseteq V\big(\{f^{(j)}:j=0,\dots,\ell, f\in \Lambda\}\big)$, and 
\item[(ii)]
in $K^n$, $\widehat\pi_i\big(V^*(\Gamma)\big)$ contains a nonempty $\Delta$-open subset of $V^*(\Lambda^{\sigma_i})$, for all $i=0,\dots, t$.
\end{itemize}
Then there exists $a\in V^*(\Lambda)$ with $\nabla(a)\in V^*(\Gamma)$.
\end{itemize}
\end{proposition}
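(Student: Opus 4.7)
My plan is to prove the two implications separately via the standard dictionary between irreducible $\Delta$-closed sets and characteristic sets of prime $\Delta$-ideals: if $\Lambda$ is a characteristic set of a prime $\Delta$-ideal $P\subset K\{x\}$, then $V(P)$ is irreducible $\Delta$-closed and $V^*(\Lambda)$ is a $\Delta$-dense $\Delta$-open subset of $V(P)$.

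For III $\Rightarrow$ III$'$, given $\Lambda,\Gamma$ satisfying (i) and (ii), I would let $V:=V(P)$ and $W:=V(Q)$, where $P,Q$ are the prime $\Delta$-ideals having $\Lambda,\Gamma$ as characteristic sets. From (i), $V^*(\Gamma)\subseteq V(\{f^{(j)}\})=\tau V$; since $V^*(\Gamma)$ is $\Delta$-dense in $W$ and $\tau V$ is $\Delta$-closed, this yields $W\subseteq\tau V$. From (ii) together with irreducibility of $V^{\sigma_i}$, $\widehat\pi_i(W)\supseteq\widehat\pi_i(V^*(\Gamma))$ contains a nonempty $\Delta$-open subset of $V^{\sigma_i}$ and hence is $\Delta$-dense in $V^{\sigma_i}$. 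So the hypotheses of III are met. I would then invoke the strengthening Remark~\ref{better3} on the $\Delta$-open subset
$$O:=V^*(\Gamma)\cap\widehat\pi^{-1}\bigl(V^*(\Lambda)\bigr)\ \subseteq\ W,$$
which is nonempty by (ii) at $i=0$ (recall $\sigma_0=\id$). This produces $a\in V$ with $\nabla(a)\in O$; then $\nabla(a)\in V^*(\Gamma)$ and $a=\widehat\pi(\nabla(a))\in V^*(\Lambda)$, as required.

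For III$'$ $\Rightarrow$ III, I would conversely take $\Lambda$ to be a characteristic set for $I_\Delta(V/K)$ and $\Gamma$ for $I_\Delta(W/K)$. Condition (i) is immediate from Remark~\ref{prorem}(a) since $V^*(\Gamma)\subseteq W\subseteq\tau V$. For (ii) the essential input is a differential Chevalley-type statement: the image $\widehat\pi_i(V^*(\Gamma))$ is $\Delta$-constructible, and by hypothesis its $\Delta$-closure is all of $V^{\sigma_i}$; since $V^{\sigma_i}$ is irreducible, this image contains a nonempty $\Delta$-open subset $U$ of $V^{\sigma_i}$. Intersecting $U$ with the $\Delta$-dense $\Delta$-open $V^*(\Lambda^{\sigma_i})\subseteq V^{\sigma_i}$ yields the nonempty $\Delta$-open subset of $V^*(\Lambda^{\sigma_i})$ demanded by (ii), and applying III$'$ then produces $a\in V^*(\Lambda)\subseteq V$ with $\nabla(a)\in V^*(\Gamma)\subseteq W$. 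The only nontrivial point in the plan is this Chevalley step, which is standard in $\mathrm{DCF}_{0,m}$ (whether derived model-theoretically from the fact that every definable set is a Boolean combination of Kolchin-closed sets, or from a Rosenfeld-type projection theorem); everything else is pure bookkeeping with characteristic sets.
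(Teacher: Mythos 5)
Your derivation of III from III$'$ is correct and is essentially the paper's argument: by quantifier elimination in $\operatorname{DCF}_{0,m}$ the image $\widehat\pi_i\big(V^*(\Gamma)\big)$ is $\Delta$-constructible and $\Delta$-dense in the irreducible $V^{\sigma_i}$, hence contains a nonempty $\Delta$-open subset, which you intersect with the dense open $V^*(\Lambda^{\sigma_i})$ to get~(ii). That half is fine.

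The other direction has a genuine gap at its very first step, where you write $V\big(\{f^{(j)}:j, f\in\Lambda\}\big)=\tau V$. By definition $\tau V$ is cut out by the $f^{(j)}$ for \emph{all} $f\in I_\Delta(V/K)$, and Remark~\ref{prorem}(a) lets you restrict to $\Lambda$ only when $I_\Delta(V/K)=[\Lambda]$. A characteristic set does not generate the prime $\Delta$-ideal: one only has $I_\Delta(V/K)=\{f: H_\Lambda^N f\in[\Lambda]\text{ for some }N\}$, so $V\big(\{f^{(j)}:f\in\Lambda\}\big)$ may be strictly larger than $\tau V$, and hypothesis~(i) does not directly yield $W\subseteq\tau V$ --- which you need before III can be invoked at all. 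The paper flags exactly this point (``because $I_\Delta(V/K)$ need not be $[\Lambda]$\dots'') and spends the second half of its proof repairing it: for $f\in I_\Delta(V/K)$ and $\b\in W$ one only gets $(H_\Lambda^N)^e\big(\sum_j b^{(j)}\epsilon_j\big)\cdot\big(\sum_j f^{(j)}(\b)\epsilon_j\big)=0$ in $\cD(K)$, and since $\cD(K)$ has zero divisors one must check that $(H_\Lambda^N)^e\big(\sum_j b^{(j)}\epsilon_j\big)$ is a unit, i.e.\ that its image under each $\pi_i^K$, which equals $H_{\Lambda^{\sigma_i}}^N(\widehat\pi_i\b)$, is nonzero. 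This holds on the nonempty $\Delta$-open subset $O=W\cap\bigcap_{i}\widehat\pi_i^{-1}(O_i)$, where the $O_i\subseteq V^*(\Lambda^{\sigma_i})$ are the open sets supplied by~(ii) (so they avoid $V(H_{\Lambda^{\sigma_i}})$); then $O\subseteq\tau V$, and since $O$ is dense in the irreducible $W$ and $\tau V$ is closed, $W\subseteq\tau V$. Note this is a second, essential use of hypothesis~(ii) for all $i$, beyond the density check you do perform. Once $W\subseteq\tau V$ is established, your application of Remark~\ref{better3} to $V^*(\Gamma)\cap\widehat\pi^{-1}\big(V^*(\Lambda)\big)$ goes through as you describe.
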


\begin{proof}
Note that the $\widehat\pi_i$ referred to in~(ii) is $\widehat\pi_i:\tau(K^n)\to (K^n)^{\sigma_i}=K^n$.

Assume that III$^\prime$ holds, and suppose $V$ and $W$ are as in~III of Theorem~\ref{geoax}.
Let $\Lambda$ be a characteristic set for $I_{\Delta}(V/K)$ and $\Gamma$ a characteristic set for $I_\Delta(W/K)$.
The fact that $W\subseteq \tau(V)$ and $V^*(\Gamma)$ is contained in $W$ implies assumption~(i).
Since $\Lambda^{\sigma_i}$ is a characteristic set for $V^{\sigma_i}$, $V^*(\Lambda^{\sigma_i})$ is $\Delta$-open in $V^{\sigma_i}$.
Hence the fact that $\widehat\pi_i(W)$ is $\Delta$-dense in $V^{\sigma_i}$ implies, using quantifier elimination for differentially closed fields, the truth of~(ii).
Therefore there exists $a\in V^*(\Lambda)\subseteq V$ with $\nabla(a)\in V^*(\Gamma)\subseteq W$, as desired.

For the converse we assume~III and prove~III$^\prime$.
Suppose $\Lambda$ and $\Gamma$ are given.
Let $V\subset K^n$ denote the irreducible $\Delta$-closed set defined by the prime $\Delta$-ideal that has $\Lambda$ as a characteristic set, and let $W\subseteq\tau(K^n)$ be the irreducible $\Delta$-closed set defined by the prime $\Delta$-ideal that has $\Gamma$ as a characteristic set.
Since $W\setminus V(H_\Gamma)=V^*(\Gamma)$, assumption~(i) implies that
$W\subseteq V\big(\{f^{(j)}:j=0,\dots,\ell, f\in \Lambda\}\big)$.
But because $I_\Delta(V/K)$ need not be $[\Lambda]$, this does not imply that $W\subseteq\tau V$.
Let us assume for now, however, that $W\subseteq\tau V$.
Since $V^*(\Lambda^{\sigma_i})$ is $\Delta$-open in $V^{\sigma_i}$, assumption~(ii) does imply that $\widehat\pi_i(W)$ is $\Delta$-dense in $V^{\sigma_i}$.
We can apply~III of Theorem~\ref{geoax}, or rather the refinement mentioned in Remark~\ref{better3}, to get $a\in V^*(\Lambda)$ with $\nabla(a)\in V^*(\Gamma)$.

It therefore remains only to prove that $W\subseteq \tau V$.
Since we know that $W\subseteq V\big(\{f^{(j)}:j=0,\dots,\ell, f\in \Lambda\}\big)$, Lemma~\ref{fsigma} implies that $\widehat\pi_i(W)\subseteq V(\Lambda^{\sigma_i})$ for all $i=0,\dots, t$.
If we therefore let $O_i$ be the nonempty $\Delta$-open subset of $V(\Lambda^{\sigma_i})$ whose existence is asserted by~(ii), then $\widehat\pi_i^{-1}(O_i)\cap W$ is a nonempty $\Delta$-open subset of $W$.
Hence $\displaystyle O:=W\cap \bigcap_{i=0}^t\widehat\pi_i^{-1}(O_i)$ is a nonempty $\Delta$-open subset of $W$.
So to prove that $W\subseteq\tau V$ it suffices to prove that $O\subseteq\tau V$.
Let $\b\in O$.
To show that $\b\in\tau V$ we need to show that $f^{(j)}(\b)=0$ for all $f\in I_\Delta(V/K)$ and $j=0,\dots,\ell$.
We know this is true for all $f\in\Lambda$ since $\b\in W$, and it is easy to see that the same must hold for all $f\in[\Lambda]$.
But as $\Lambda$ is characteristic of $I_\Delta(V/K)$, if $f\in I_\Delta(V/K)$ then $H_\Lambda^Nf\in[\Lambda]$ for some~$N$, see equality (\ref{equ}) of \S\ref{prelims}.
Hence, using identity~(\ref{fi}) of~$\S$\ref{prolong}, we get that for all $f\in I_\Delta(V/K)$
\begin{eqnarray*}
0
&=&\sum_{j=0}^\ell (H_\Lambda^Nf)^{(j)}(\b)\epsilon_j\\
&=&(H_\Lambda^Nf)^e(\sum_{j=0}^\ell b^{(j)}\epsilon_j)\\
&=&(H_\Lambda^N)^e(\sum_{j=0}^\ell b^{(j)}\epsilon_j) \cdot f^e(\sum_{j=0}^\ell b^{(j)}\epsilon_j)\\
&=& (H_\Lambda^N)^e(\sum_{j=0}^\ell b^{(j)}\epsilon_j) \cdot (\sum_{j=0}^\ell f^{(j)}(\b)\epsilon_j)
\end{eqnarray*}
It therefore suffices to verify that $(H_\Lambda^N)^e(\sum_{j=0}^\ell b^{(j)}\epsilon_j)$ is a unit in $\cD(K)$, or equivalently that for each $i=0,\dots,t$, $\pi^K_i(H_\Lambda^N)^e(\sum_{j=0}^\ell b^{(j)}\epsilon_j)\neq 0$ in $K$.
But
$$\pi^K_i(H_\Lambda^N)^e(\sum_{j=0}^\ell b^{(j)}\epsilon_j)=(H_\Lambda^N)^{\sigma_i}(\sum_{j=0}^\ell b^{(j)}\pi_i\epsilon_j)=(H_{\Lambda^{\sigma_i}}^N)(\widehat \pi_i\b)$$
and the latter is not zero because $\widehat\pi_i(\b)\in O_i$ and $O_i$ is disjoint of $V(H_{\Lambda^{\sigma_i}})$ by assumption.
This completes the proof that $O\subseteq\tau V$, and hence that $W\subseteq\tau V$.
\end{proof}

\begin{corollary}
The theory of $\cD$-differential fields has a model companion.
\end{corollary}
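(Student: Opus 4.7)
The plan is to produce a first-order axiomatisation of the class of existentially closed members of $\cK$. Since, by Lemma~\ref{fraction}, $\cK$ is the class of models of the universal part of the theory of $\cD$-differential fields, such an axiomatisation will constitute the desired model companion.

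First I would verify that conditions~I and~II of Theorem~\ref{geoax} are first-order. Condition~I is simply the theory $\operatorname{DCF}_{0,m}$, and condition~II asserts surjectivity of the $0$-definable maps $\sigma_1,\dots,\sigma_t$, which is expressed by the scheme $\forall y\,\exists x\, \sigma_i(x)=y$. Observe also that by Lemma~\ref{fraction} the axioms cutting out $\cK$ (integrality and injectivity of the $\sigma_i$) are universal.

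The substantive work is to axiomatise condition~III of Theorem~\ref{geoax}, and this is precisely the purpose of Proposition~\ref{3prime}, which replaces~III by the equivalent~III$'$ phrased in terms of characteristic sets rather than irreducible $\Delta$-closed sets. Following the strategy of~\cite{leonsanchez12,leonsanchez13}, I would stratify finite tuples in $K\{x\}$ and $K\{\x\}$ according to their \emph{shape}, namely the list of leaders and degrees under the canonical ranking fixed in~$\S$\ref{prelims}. For each fixed pair of shapes $(s,s')$ one writes down a first-order formula $\chi_{s,s'}(\L,\Gamma)$ asserting that $\L,\Gamma$ have those shapes and are characteristic sets of prime $\Delta$-ideals, together with first-order renderings of hypotheses~(i) and~(ii) of~III$'$ at that shape; the conclusion of~III$'$ is itself $\Sigma_1$. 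The union of the resulting implications over all shape-pairs axiomatises~III$'$ first-order, and together with the axioms capturing~I and~II yields the model companion.

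The hard part will be showing that ``$\L$ is a characteristic set of a prime $\Delta$-ideal of $K\{x\}$'' is first-order once the shape is fixed. Fixing the shape pins down the leaders, degrees, separants, and initials of the members of $\L$, so autoreducedness of Definition~\ref{coherent} is immediately elementary, and the coherency condition becomes elementary provided one has a uniform bound on the exponent $N$ appearing in it --- such bounds follow from the type of pseudo-reduction analysis developed in~\cite{leonsanchez12}. Primality of the $\Delta$-ideal associated with $\L$ at a fixed shape is handled analogously, using the test for primality via characteristic decompositions given in~\cite{leonsanchez12,leonsanchez13}. Likewise, condition~(ii) of~III$'$, which demands that $\widehat\pi_i\big(V^*(\Gamma)\big)$ contain a nonempty $\Delta$-open subset of $V^*(\L^{\sigma_i})$, is definable once shapes are fixed because a bound on the complexity of a witnessing $\Delta$-polynomial is then available. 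Once all these pieces are in place the axiomatisation assembles routinely, and the resulting first-order theory has the existentially closed members of~$\cK$ as its models, completing the proof.
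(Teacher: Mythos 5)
Your proposal is correct and follows essentially the same route as the paper: the paper likewise observes that conditions~I and~II of Theorem~\ref{geoax} together with condition~III$'$ of Proposition~\ref{3prime} give the axiomatisation, with the key point being that ``$\L$ is a characteristic set of a prime $\Delta$-ideal'' is definable in the parameters (via Rosenfeld's criterion, which is the content of your shape-stratification and bounding argument), deferring the details to~\cite{leonsanchez13}.
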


\begin{proof}
This is because conditions~I and ~II of Theorem~\ref{geoax}, and~III$^\prime$ of Proposition~\ref{3prime}, are first-order  axiomatisable. For the latter the main point is that, by a criterion of Rosenfeld's, being the characteristic set of a prime $\Delta$-ideal is definable in parameters.
See~\cite{leonsanchez13} for a more detailed discussion.
\end{proof}

We call this model companion the theory of {\em $\cD$-differentially closed fields} and denote it by $\ddcf$.
When $m=0$ (so $\Delta=\emptyset$) we recover the theory of existentially closed $\cD$-fields, $\ecdf$, introduced in~\cite{paperC}.
On the other hand, when $\cD(R)=R\times R$ we recover the theory of existentially closed difference-differential fields $\dcfa$ whose existence was established in~\cite{leonsanchez13}.

\bigskip
\section{Properties of $\ddcf$}
\label{prope}

\noindent
Given that the model companion exists it is by now routine to show that this model companion is tame in the various ways one expects.
We list below five basic model-theoretic properties of $\ddcf$ that are readily verified using the results and methods of~\cite[$\S$5]{paperC}, which were themselves based on the work of Chatzidakis and Hrushovski on difference fields~\cite{acfa1}.
As no serious complications arise in the differential setting, we omit proofs entirely.
(See also~\cite{leonsanchez13} for the difference-differential case when $\cD(R)=R\times R$.)

We will need a bit more notation. A $\cD$-differential field is said to be {\em inversive} if the associated endomorphisms are surjective.
Given a subset $B$ of a model of $\ddcf$ we denote by $\langle B\rangle$ the smallest inversive $\cD$-differential subfield that contains $B$.

\begin{itemize}
\item[1.]
\underline{Completions}.
The completions of $\ddcf$ are determined by the action of the associated endomorphisms on $k^{\alg}$.
In fact, if $(K,\Delta,\partial)$ and $(L,\Delta',\partial')$ are models of $\ddcf$ with a common algebraically closed  inversive $\cD$-differential subfield $F$, then $(K,\Delta,\partial)\equiv_F(L,\Delta',\partial')$.
\item[2.]
\underline{$\acl$}.
If $B$ is a subset of a model of $\ddcf$ then $\acl(B)=\langle B\rangle^{\alg}$.
In the case when $t=0$ (so $B$ is a local $k$-algebra), $\dcl(B)=\langle B\rangle$.
\item[3.]
\underline{Quantifier reduction}.
Suppose $(L,\Delta,\partial)\models\ddcf$, $K$ is an inversive $\cD$-differential subfield, and $a,b\in L^n$.
Then $\tp(a/K)=\tp(b/K)$ if and only if there is an isomorphism between $(K\langle a\rangle,\Delta,\partial)$ and $(K\langle b\rangle,\Delta,\partial)$ that fixes $K$, takes $a$ to $b$, and extends to an isomorphism from $(K\langle a\rangle^{\alg},\sigma)$ to $(K\langle b\rangle^{\alg},\sigma)$.
In particular, if $t=0$ then $\ddcf$ admits quantifier elimination.
\item[4]
\underline{Simplicity}.
Every completion of $\ddcf$ is simple with the independence theorem holding over (model-theoretically) algebraically closed sets.
Nonforking independence is given by $A\ind_CB$ if and only if $\langle A\rangle$ and $\langle B\rangle$ are algebraically disjoint over $\langle C\rangle$. In the case when $t=0$, $\ddcf$ is a complete stable theory.
\item[5]
\underline{Imaginaries}.
Every completion of $\ddcf$ eliminates imaginaries.
\end{itemize}
It is a little less straightforward to extend the methods of~\cite{paperC} to obtain the appropriate Canonical Base Property and Zilber Dichotomy.
We expect these to hold in a more general setting of fields equipped with commuting free operators, and therefore leave it as the subject of future work.

We conclude by discussing some of the natural reducts of $\ddcf$.

\begin{proposition}
Suppose $(K,\D,\dd)\models \ddcf$.
\begin{enumerate}
\item [(i)] For each associated automorphism $\sigma$, $(K,\D,\sigma)\models \dcfa$.
\item [(ii)] For any $r\leq m$ and $\Delta_1\subseteq\Delta$ a subset of $r$-many derivations, $(K,\D_1,\dd)\models \ddcfr$.
In particular, $(K,\dd)\models \ecdf$.
\item [(iii)]
For $\Delta_1$ as above, and setting $\Delta_2:=\Delta\setminus\Delta_1$, we have that $(K^{\D_2},\D_1,\dd)\models \ddcfr$.
Here $K^{\D_2}$ denotes the field of $\Delta_2$-constants.
\end{enumerate}
\end{proposition}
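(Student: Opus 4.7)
The plan is, for each of (i), (ii), and (iii), to verify the geometric characterisation of the target theory---namely, the analogue of axioms~(I), (II), (III) of Theorem~\ref{geoax}---by leveraging the corresponding axioms satisfied by $(K,\D,\dd)\models \ddcf$. Axioms~(I) and~(II) transfer with little difficulty: for (ii) the reduct of a $\Delta$-closed field to any subset of its derivations is $\Delta_1$-closed, and for (iii) one uses the classical fact that in characteristic zero the $\Delta_2$-constants of a $\Delta$-closed field form a $\Delta_1$-closed field; the associated endomorphisms restrict to automorphisms in each case since they commute with all of $\Delta$. The content therefore lies in verifying the prolongation axiom~(III), which in each case we reduce to~(III) for $(K,\D,\dd)$ itself via an auxiliary geometric construction in the full $\cD$-prolongation.

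For (i), fix $\sigma=\sigma_i$, and let $V$ and $W\subseteq V\times V^\sigma$ be irreducible $\Delta$-varieties over $K$ with $\Delta$-dominant projections. Pick a generic point $(a,a')\in W(L)$ over $K$ in a $\Delta$-closed extension $L\supseteq K$, and extend the $\cD$-structure on $K$ to $L$ via Proposition~\ref{extension} by prescribing $\sigma_i(a):=a'$ and, for each $j\neq 0,i$, choosing $\sigma_j(a)$ to be a $\Delta$-generic point of $V^{\sigma_j}$ over $K$ that is $\Delta$-independent of everything previously chosen. Letting $W'$ be the Kolchin locus of $\nabla(a)$ in $\tau V$ over $K$, one obtains an irreducible $\Delta$-subvariety of $\tau V$ with $\widehat\pi_j(W')=V^{\sigma_j}$ for all $j$ and $(\widehat\pi_0,\widehat\pi_i)(W')\subseteq W$. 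Axiom~(III) for $(K,\D,\dd)$, in the refined form of Remark~\ref{better3}, then yields $\tilde a\in V(K)$ with $\nabla(\tilde a)\in W'$, whence $(\tilde a,\sigma(\tilde a))\in W$.

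For (ii), we instead verify existential closedness of $(K,\D_1,\dd)$ in the class $\cK_1$ of $\Delta_1$-$\cD$-integral domains with injective associated endomorphisms (which suffices, since $\ddcfr$ is the model companion of the universal theory of $\cK_1$): given any $\cK_1$-extension $R_1$ of $(K,\D_1,\dd)$ and an existential $L_{\D_1,\dd}$-statement over $K$ witnessed in $R_1$, we extend $\Delta_2$ from $K$ to $R_1$ (enlarging $R_1$ if necessary) compatibly with the $\Delta_1$- and $\cD$-structures, producing a $\cK$-extension of $(K,\D,\dd)$ in which the statement is still witnessed, and then apply $\cK$-existential closedness of $(K,\D,\dd)$. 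For (iii), we argue geometrically: take the auxiliary $\Delta$-varieties $\tilde V:=V\cap\{\Delta_2 x=0\}$ and $\tilde W:=W\cap\{\Delta_2 \x=0\}$ as $\Delta$-varieties over $K^{\D_2}$. These are $\Delta$-irreducible (both over $K^{\D_2}$ and over $K$, by virtue of $K^{\D_2}$ being algebraically closed in $K$), satisfy $\tilde W\subseteq \tau_\D \tilde V$, and the density $\widehat\pi_i(\tilde W)=\tilde V^{\sigma_i}$ is witnessed by a $\Delta_1$-generic point of $W$ in $K^{\D_2}$, which exists by $\Delta_1$-closedness of $K^{\D_2}$. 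Axiom~(III) for $(K,\D,\dd)$ applied to $(\tilde V,\tilde W)$ then delivers $a\in\tilde V(K)\subseteq (K^{\D_2})^n$ with $\nabla(a)\in\tilde W\subseteq W$.

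The principal technical obstacle is verifying the compatibility of the various auxiliary constructions with the ambient $\Delta$- and $\cD$-structures. For (i), this is packaged into Proposition~\ref{extension}, which does the heavy lifting. For (ii), the subtlety is the compatible extension of $\Delta_2$ from $K$ to the $\Delta_1$-$\cD$-extension $R_1$---a standard but delicate exercise in characteristic zero derivation theory that will typically require enlarging $R_1$ so that the $\Delta_2$-extension commutes with both $\Delta_1$ and $\dd$. For (iii), the key point is that $\Delta$-irreducibility of $\tilde V$ and $\tilde W$ over $K^{\D_2}$ is preserved by base extension to $K$, which rests on the algebraic closedness of $K^{\D_2}$ in $K$ (proved directly by applying any $\delta\in\Delta_2$ to the minimal polynomial of a hypothetical algebraic element lying outside $K^{\D_2}$).
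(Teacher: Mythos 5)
Your part~(i) is essentially the paper's argument: the paper also pulls $W$ back to an irreducible $\Delta$-subvariety of $\tau V$ dominating $W$ and the factors $V^{\sigma_j}$ and then applies axiom~III; your version, which realises that subvariety as the Kolchin locus of $\nabla(a)$ for a point $a$ whose images $\sigma_j(a)$ are prescribed via Proposition~\ref{extension}, is a careful implementation of the same idea (and correctly attends to dominance onto the factors $V^{\sigma_j}$ for $j\neq 0,i$, which the paper leaves implicit). Your part~(iii) takes a genuinely different route: the paper does not argue geometrically at all, but instead expands an arbitrary $\cD$-$\Delta_1$-extension of $K^{\Delta_2}$ by declaring the derivations in $\Delta_2$ to be \emph{trivial} (legitimate precisely because the parameters lie in $K^{\Delta_2}$), embeds the result into a model of $\ddcf$, and then quotes the statement on completions over a common algebraically closed inversive subfield to pull the witness back into $K^{\Delta_2}$. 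Your geometric alternative via $\tilde V=V\cap\{\Delta_2x=0\}$ and $\tilde W=W\cap\{\Delta_2\x=0\}$ is plausible, but it carries a real burden of verification (persistence of irreducibility under base change from $K^{\Delta_2}$ to $K$, the identification $\tau_\Delta\tilde V=\tau_{\Delta_1}V\cap\{\Delta_2\x=0\}$, and the transfer of $\Delta_1$-density over $K^{\Delta_2}$ to $\Delta$-density over $K$); the paper's softer argument avoids all of this.

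The genuine gap is in part~(ii). The entire weight of your argument rests on the claim that any $\cD$-$\Delta_1$-domain extension $R_1$ of $(K,\Delta_1,\partial)$ embeds (after enlargement) into a $\cD$-$\Delta$-extension of $(K,\Delta,\partial)$, i.e.\ that the derivations of $\Delta_2$ can be extended from $K$ to $R_1$ so as to commute both with $\Delta_1$ \emph{and with the free operators $\partial$}. You describe this as ``a standard but delicate exercise in characteristic zero derivation theory,'' but it is not standard: commuting a new derivation past the operators $\partial$ means finding, for a generator $a$ of $R_1$ over $K$, a value $\delta(a)$ satisfying a system of linear consistency conditions coming from the whole $\Delta_1$-$\cD$-ideal of $a$, and establishing solvability of such systems is exactly the kind of lifting problem that requires the machinery of Theorem~\ref{diffhen} (or Kolchin's Chapter~0, \S4 lemma), here in a transposed form that the paper never proves. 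Note also that the trivial-extension trick that rescues (iii) is unavailable here, since the parameters lie in $K$ where $\Delta_2$ is already nontrivial. The paper sidesteps all of this: it verifies axiom~III$'$ for the reduct directly, using the single fact (extracted from the proof of Kolchin's Irreducibility Theorem, \cite[Chapter~0, \S6]{kolchin85}) that a characteristic set of a prime $\Delta_1$-ideal over $K$ is also a characteristic set of a prime $\Delta$-ideal of $K\{x\}$. Unless you supply a proof of your extension claim, you should replace this step of (ii) with the characteristic-set argument.
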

\begin{proof}
The following are adaptations of the arguments of Proposition 4.12 of \cite{paperC} and Proposition 3.4 of \cite{leonsanchez13}.

(i) By the geometric characterization of the models of $\dcfa$ given in \cite{leonsanchez13}, it suffices to show that if $V$ and $W$ are irreducible $\D$-closed sets such that $W\subset V\times V^{\sigma}$ projects $\D$-dominantly onto each factor, then there is $a\in V$ with $(a,\sigma a)\in W$.
To prove this, consider the pull-back $W'$ of $W$ under $\tau V\to V\times V^{\sigma}$, and apply condition~(III) of Theorem~\ref{geoax} to an irreducible component of $W'$ that projects $\D$-dominantly onto $W$.

(ii)
Checking the axioms for $\ddcfr$, but working with~(III$^\prime$) of~\ref{3prime} rather than~(III) of~\ref{geoax}, we see that it suffices to show that if $\L$ is a characteristic set of a prime $\D_1$-ideal of the $\Delta_1$-polynomial ring over $K$, then $\L$ is also characteristic for a prime $\D$-ideal of $K\{x\}$.
That  this is indeed the case is part of the proof of Kolchin's Irreducibility Theorem \cite[Chapter~0, $\S$6]{kolchin85}.

(iii) We check that $(K^{\D_2},\D_1,\dd)$ is an existentially closed $\DD$-$\D_1$-field. Let $\phi(x)$ be a quantifier free formula in the language of $\DD$-$\D_1$-rings with parameters from $K^{\D_2}$ and with a realisation in some $\DD$-$\D_1$-field  extension $(F,\D_1,\dd)$.
We can expand $(F,\D_1,\dd)$ to a $\cD$-$\D$-field $(F,\D,\dd)$ by interpreting all the derivation in $\Delta_2$ to be trivial on $F$.
So $(F,\D,\dd)$ extends $(K^{\D_2},\D,\dd)$, and itself extends to a model $(L,\D,\dd)\models\ddcf$.
Since $K^{\D_2}$ is a common algebraically closed inversive $\DD$-differential subfield of $(K,\D,\dd)$ and $(L,\D,\dd)$, property~(1) above implies that there is $b\in K$ such that
$$(K,\D,\dd)\models\phi(b)\wedge\bigwedge_{\delta\in\Delta_2}\delta(b)=0$$
Since $\phi$ is quantifier free and refers only to $\D_1$, we have $(K^{\D_2},\D_1,\dd)\models \phi(b)$, as desired.
\end{proof}


\end{document}